\newtheorem{thm}{Theorem}[section]
\newtheorem{lem}[thm]{Lemma}
\newtheorem{prop}[thm]{Proposition}
\newtheorem{df}[thm]{Definition}
\newtheorem{rem}[thm]{Remark}
\newtheorem{conv}{Convention}
\numberwithin{equation}{section}
\newcommand\<{\langle}
\renewcommand\>{\rangle}
\newcommand{\A}{\mathcal {A}}\newcommand{\B}{\mathcal {B}}\newcommand{\C}{\mathcal {C}}
\renewcommand{\d}{\tilde D}
\title{Global existence for the Euler-Maxwell system}
\author{P. Germain, N. Masmoudi}
\begin{document}

\maketitle

\begin{abstract}
The Euler-Maxwell system  
describes the evolution of a plasma when the collisions 
are important enough that each species is in a hydrodynamic 
equilibrium. In this paper we prove global existence of 
small solutions to this system set in the whole three-dimensional space, by combining the space-time resonance method, 
dispersive estimates, localization estimates  and energy estimates. An important novelty is that we can 
 prove a very slow growth ($t^{C\epsilon}$)  of high derivatives even with only $\frac1{\sqrt t}$ decay of the $L^\infty$ norm
by reiterating the energy estimate. 
\end{abstract}
 
\begin{quote}
\footnotesize\tableofcontents
\end{quote}

\section{Introduction}

\subsection{Plasma physics and Euler-Maxwell}

There are different models to describe the state of a  plasma depending on 
several  parameters such as the Debey length, the plasma frequency, the collision 
frequencies between the different species... 
Formal derivation of these models can be found in Plasma Physics textbooks 
(see for instance 
 Bellan \cite{Bellan06},  Boyd and Sanderson \cite{BS03}, Dendy \cite{Dendy90} and the 
paper \cite{BCDDGT04} ...)
 
Since the plasma consists of a very large number of interacting particles, 
it is appropriate to adopt a statistical approach to describe it.
In the kinetic description, 
it is only necessary to evolve the distribution function $f_\alpha (t,x,v)$ for 
each species in the system. Vlasov equation is used in this case with the 
Lorentz force term and a collision term. It is 
 coupled with the  Maxwell equations 
for the electromagnetic  fields.
%  If collisions are taken into account there is a extra term 
% in the Vlasov equation to take them into account.  

If collisions are important, then each species is in a local equilibrium 
and the plasma is  treated as a fluid. More precisely it  is treated as a mixture 
of two or more interacting fluids. This is the two-fluid model or the 
so-called  Euler-Maxwell system.  We refer  to \cite{LM01arma1,GS04,LM10}
for more about  hydrodynamic limits. 
Another level of approximation  consists in treating the plasma as a single fluid 
by using the fact that the mass of the electrons is much smaller than
the mass of the ions.  This is the model which we are going to consider in this paper. 

\subsection{The Euler-Maxwell equation}

The Cauchy problem for the one fluid version of the Euler-Maxwell system reads
\begin{equation} \label{em}
\left\{ 
\begin{array}{l}
\rho \left( \partial_t u + u\cdot \nabla u \right) = -\frac{p'(\rho)}{m} \nabla \rho - \frac{e \rho}{m}\left( E + \frac{1}{c} u \times B \right) \\
\partial_t \rho + \nabla \cdot (\rho u) = 0 \\
\partial_t B + c \nabla \times E = 0 \\
\partial_t E - c \nabla \times B = 4 \pi e \rho u \\
\nabla \cdot E = 4 \pi e (\bar \rho - \rho) \\
\nabla \cdot B = 0 \\
(u,\rho,E,B)(t=0) = (u_0,\rho_0,E_0,B_0).
\end{array}
\right.
\end{equation}
The unknown functions are: $\rho$, the density of electrons; $u$, the average velocity of the electrons; $E$, the electric field; $B$ the magnetic field. The physical constants are: $c$, the speed of light; $e$, the charge of the electron; $m$, the mass of the electron. Finally, $\bar \rho$ is the uniform density of ions, and the electron gas is supposed to be barotropic, the pressure being given by $p(\rho)$.  

Let us first recall a few results related to \eqref{em}. Global existence of weak solutions was obtained for 
a related 1d model in \cite{CJW00} using compensated compactness. Also, several asymptotic problems (W{KB} asymptotics, incompressible limit, non-relativistic limit, quasi-neutral limit...) were studied
to derive simplified models starting from  the Euler-Maxwell system  \cite{Texier05,YWLL10,YW09,PW08}. We also refer to 
\cite{Mamousi10jmpa} where the incompressible Navier-Stokes system is studied. 

Going back to our system  \eqref{em}, 
we notice that the two last equations above can be removed, as soon as they are satisfied at the initial time, {\em which we assume from now on}: they are then conserved by the flow given by the first four.

\subsection{Vicinity of the trivial equilibrium state}

An obvious equilibrium state of the above system is $(\rho,u,E,B) = (\bar \rho,0,0,0)$. In order to study its stability, it is instructive to linearize the above system, and compute evolution equations for its unknowns. It is convenient then to split $u$ into its divergence-free part $Pu$, and its curl-free part $Qu$: $u= Pu + Qu$. Similarly, $E = PE + QE$. The obtained system reads

\begin{equation*}
\left\{ 
\begin{array}{l}
\left( \partial_t^2 - c_s^2 \Delta + \omega_p^2 \right) \left( \begin{array}{c} QE \\ \rho - \bar \rho \\ Qu \end{array} \right) = 0 \\
\left( \partial_t^2 - c^2 \Delta + \omega_p^2 \right) \left( \begin{array}{c} PE \\ \nabla \times B + \frac{4\pi e \bar \rho}{c} Pu \end{array} \right) = 0 \\
\partial_t \left( B - \frac{cm}{e} \nabla \times u \right) = 0 \\
\end{array}
\right.
\end{equation*}
where the speed of sound $c_s$ and the plasma frequency $\omega_p$ are given by
$$
c_s = \sqrt{ \frac{p'(\bar \rho)}{m}} \;\;\;\;\mbox{and} \;\;\;\;\omega_p = \sqrt{\frac{4 \pi e^2 \bar \rho}{m}}.
$$
Thus around the equilibrium, and at a linear level, some unknowns are governed by Klein Gordon equation (with different speeds), whereas the quantity $B - \frac{cm}{e} \nabla \times u$ is conserved. The Klein Gordon equations entail decay, which is one of the keys of the global stability result which we will prove; as for the quantity $B - \frac{cm}{e} \nabla \times u$, no decay is to be expected a priori. We will therefore set it to zero, which, as it turns out, is conserved by the nonlinear flow.

\subsection{Adimensionalization and reductions}

In the following, we set for simplicity the physical constants $m$, $e$, $c$, as well as $\bar \rho$ to 1. We also drop the $4\pi$ factors, since they are irrelevant. However $
c_s^2 = p'(\bar \rho)=p'(1)$ remains a number less than 1. In order to simplify a little bit the estimates, we assume 
$$
p(\rho) \overset{def}= \frac{c_s^2}{3} \rho^3.
$$
Finally, set
$$
n \overset{def}{=} \rho-1.
$$
The Cauchy problem becomes
\begin{equation*}
(EM)\;\;\;\;\;\;\;\;
\left\{ 
\begin{array}{l}
\partial_t u + u\cdot \nabla u = - c_s^2 \rho \nabla \rho - E - u \times B \\
\partial_t \rho + \nabla \cdot (\rho u) = 0 \\
\partial_t B + \nabla \times E = 0 \\
\partial_t E - \nabla \times B = \rho u \\
\nabla \cdot E = -n \\
\nabla \cdot B = 0\\
(u,n,E,B)(t=0) = (u_0,n_0,E_0,B_0).
\end{array}
\right.
\end{equation*}
We shall furthermore assume that, initially, 
\begin{equation}
\label{curlfree}
B=\nabla \times u.
\end{equation}
This condition is conserved by the flow of the above system: in order to see this, use the identity $u \cdot \nabla u = -u \times (\nabla \times u) + \nabla \frac{|u|^2}{2}$ to compute
\begin{equation*}
\begin{split}
\partial_t (B - \nabla \times u) & = \nabla \times \left( u \cdot \nabla u  + u \times B \right) \\
& = \nabla \times \left( - u \times (\nabla \times u) + \nabla \frac{|u|^2}{2} \right) - \nabla \times (u \times B) \\
& = \nabla \times \left( u \times (B - \nabla \times u) \right) .
\end{split}
\end{equation*}
The linearized system reads now
\begin{equation}
\label{oiseau}
\left\{ 
\begin{array}{l}
\left( \partial_t^2 - c_s^2 \Delta + 1 \right) \left( \begin{array}{c}Qu \\ n \\  QE \end{array} \right) = 0 \\
\left( \partial_t^2 - \Delta + 1 \right) \left( \begin{array}{c} Pu \\ PE \\ B \end{array} \right)= 0 .
\end{array}
\right.
\end{equation}

\subsection{Obtained results}

Prior to stating our theorem, we need to define the operator $A \overset{def}{=} \frac{\<D\>}{|D|}$ (see Section~\ref{notations} for the precise definition of this operator).

\begin{thm}
\label{mainthm}
Assume that the resonance separation condition~\ref{bluebird} holds; it is the case generically in $c_s$. Fix $\alpha_0>0$. Then there exists $C_0 , \epsilon_0 , N_0>0$ such that: if $\epsilon<\epsilon_0$, $N>N_0$ and
$$
\left\| \< x \>^{1+\alpha_0} (u_0,A n_0,E_0,A B_0) \right\|_{H^N} < \epsilon,
$$
then there exists a unique global solution of $(EM)$ such that
$$
\sup_t \left[ \< t \>^{-C_0 \epsilon} \| (u,A n,E,A B)(t) \|_{H^N} + \sqrt{\< t \>} \|(u,A n,E,A B)(t)\|_3 \right] \lesssim \epsilon.
$$
Furthermore, it scatters as $t $ goes to  infinity in that there exists a solution $(u_{\ell},n_\ell,E_\ell,B_\ell)$ of the linear system~(\ref{oiseau}) corresponding to intial data 
in $H^{N-2}$ such that
$$
\left\| (u,n,E,B)(t) - (u_{\ell},n_\ell,E_\ell,B_\ell)(t) \right\|_{H^{N-2}} \rightarrow 0 \;\;\;\;\;\;\;\;\mbox{as $t \rightarrow \infty$}.
$$
\end{thm}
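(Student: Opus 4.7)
\medskip

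\noindent\textbf{Proof plan.} The plan is to run a bootstrap argument on a norm that incorporates three ingredients: a high-regularity energy norm allowed to grow like $\<t\>^{C_0\epsilon}$, a dispersive norm enforcing the $\<t\>^{-1/2}$ decay in $L^3$, and an auxiliary weighted norm (based on the vector field $x + t\nabla\Lambda_\pm$ associated to the two Klein--Gordon flows) that captures the $\<x\>^{1+\alpha_0}$ localization of the initial data. Fixing $T>0$ and assuming the corresponding quantity is bounded by $2C\epsilon$ on $[0,T]$, I would aim to improve it to $C\epsilon$.

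\medskip

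\noindent\textbf{Step 1: Diagonalization and Duhamel form.} Using \eqref{curlfree} and the decomposition $u=Pu+Qu$, $E=PE+QE$, the system splits into two Klein--Gordon-type blocks with speeds $c_s$ and $1$, as in \eqref{oiseau}. I would introduce complex unknowns $\alpha_\pm = \partial_t\mp i\Lambda_j$ where $\Lambda_1=\sqrt{1-c_s^2\Delta}$ and $\Lambda_2=\sqrt{1-\Delta}$; the operator $A=\<D\>/|D|$ arises here to make these variables well-behaved at low frequencies, which is why the theorem weights $n$ and $B$ by $A$. After this change of variables the system takes the Duhamel form
\begin{equation*}
\widehat{\alpha_\pm}(t,\x) = e^{\pm it\Lambda(\x)}\widehat{\alpha_\pm}(0,\x)+\sum \int_0^t\!\!\int e^{is\,\phi(\x,\eta)} m(\x,\eta)\,\widehat{\beta_\pm}(s,\eta)\,\widehat{\gamma_\pm}(s,\x-\eta)\,d\eta\,ds,
\end{equation*}
with phases $\phi(\x,\eta)=\pm\Lambda_i(\x)\pm\Lambda_j(\eta)\pm\Lambda_k(\x-\eta)$ and smooth symbols $m$ (cubic terms are treated similarly but are easier since they decay faster).

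\medskip

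\noindent\textbf{Step 2: Space--time resonance analysis.} For each trilinear phase $\phi$, one locates the time resonances $\mathcal{T}=\{\phi=0\}$, the space resonances $\mathcal{S}=\{\nabla_\eta\phi=0\}$, and especially their intersection $\mathcal{R}=\mathcal{T}\cap\mathcal{S}$. Because the two Klein--Gordon speeds $c_s$ and $1$ differ, the resonance separation hypothesis~\ref{bluebird} should say that $\mathcal{R}$ is empty, or else is separated from low-frequency singularities, so that one can split the symbol $m$ into a piece supported away from $\mathcal{T}$ (handled by integration by parts in $s$) and a piece supported away from $\mathcal{S}$ (handled by integration by parts in $\eta$, which is what requires the $x$-weight and produces the vector-field norm). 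The genericity of the condition in $c_s$ reflects a transversality between the Klein--Gordon characteristic varieties.

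\medskip

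\noindent\textbf{Step 3: Closing the dispersive bound.} Combining the stationary-phase dispersive estimate $\|e^{\pm it\Lambda_j}f\|_3\lesssim \<t\>^{-1/2}\|f\|_{H^{s_0},1}$ for Klein--Gordon in $\R^3$ with the resonance decomposition of Step~2, one controls $\sqrt{\<t\>}\|(u,An,E,AB)(t)\|_3$ by $\epsilon+(C\epsilon)^2$, thereby improving the dispersive piece of the bootstrap. Here the $\<x\>^{1+\alpha_0}$ weight enters to absorb one power of $1/|\nabla_\eta\phi|$ near $\mathcal{S}\setminus\mathcal{R}$.

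\medskip

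\noindent\textbf{Step 4: Slow-growth energy estimate.} A direct energy estimate would give $\frac{d}{dt}\|(u,An,E,AB)\|_{H^N}^2\lesssim \|(u,\dots)\|_\infty \|(u,\dots)\|_{H^N}^2$ which, with only $\<t\>^{-1/2}$ decay, is not integrable. The key new idea announced in the abstract is to iterate the energy estimate: one replaces the solution inside the RHS by its Duhamel expansion and gains an extra factor of $\<t\>^{-1/2}$ from each bilinear oscillatory integral (at the cost of derivatives), eventually producing a bound of the form $\frac{d}{dt}\|\cdot\|_{H^N}^2\lesssim \<t\>^{-1}(C\epsilon)^2\|\cdot\|_{H^N}^2$, which integrates to the desired $\<t\>^{C_0\epsilon}$ growth. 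This is the step I expect to be the main technical obstacle: carrying out enough iterations while keeping track of weights, multilinear symbols, and the loss of derivatives from integration by parts, all uniformly in $T$.

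\medskip

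\noindent\textbf{Step 5: Conclusion and scattering.} Once the three norms are closed, $T$ can be taken to infinity. For scattering, the Duhamel formula for $\alpha_\pm$ together with the improved decay in Step~3 implies that $\int_0^\infty e^{\mp is\Lambda}(\text{nonlinearity})\,ds$ converges in $H^{N-2}$, defining the asymptotic profile; two derivatives are lost to trade $L^3$ decay for summability of the nonlinear contribution in the energy norm. The resulting linear Klein--Gordon solution yields $(u_\ell,n_\ell,E_\ell,B_\ell)$ approximating $(u,n,E,B)$ in $H^{N-2}$ as $t\to\infty$.
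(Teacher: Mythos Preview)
Your outline captures the broad architecture (diagonalize, bootstrap, space--time resonances, energy, scattering), but two of the load-bearing steps rest on an incorrect reading of the paper's mechanism, and as written the argument would not close.

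First, the resonance separation condition~\ref{bluebird} is \emph{not} that $\mathcal{R}$ is empty. In fact $\mathcal{R}\neq\emptyset$ in general; the condition is that the set $\mathcal{O}$ of \emph{outcome} frequencies (the $\xi$-projection of $\mathcal{R}$) is disjoint from the set $\mathcal{G}$ of \emph{source} frequencies (the $\eta$- and $(\xi-\eta)$-projections). This is what drives the whole scheme: one splits the solution via cutoffs $Z_\mathcal{O}$ and $\widetilde Z_\mathcal{O}$, and the point is that the non-outcome part $\widetilde Z_\mathcal{O}(\mathcal{A},\mathcal{B})$ enjoys the full Klein--Gordon $L^\infty$ decay $\<t\>^{-1}$ (and nearly-$L^6$ decay $\<t\>^{-1+}$), precisely because no space--time resonance can feed into those frequencies. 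Your bootstrap norm is missing this outcome/non-outcome distinction and the associated $L^\infty$ and $L^{6-}$ pieces, without which neither the decay nor the energy step closes.

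Second, your Step~4 misidentifies the mechanism for the slow-growth energy estimate. The paper does \emph{not} iterate Duhamel to accumulate factors of $\<t\>^{-1/2}$; that would lose a derivative at each pass and never terminate in a quasilinear problem. What actually happens is: in $\partial_t\|\partial^N\mathcal{A}\|_2^2$ the low-frequency factor is split into $\widetilde Z_\mathcal{O}$ (which decays like $\<t\>^{-1}$ in $L^\infty$, so Gronwall applies directly) and $Z_\mathcal{O}$ (bounded frequency, only $\<t\>^{-1/2}$ decay). For the $Z_\mathcal{O}$ piece one observes that with $|\eta|$ bounded and $|\xi|$ large the phase $\phi(\xi,\eta)$ is uniformly bounded away from zero (Lemma~\ref{lemNon}), so one integrates by parts in $s$. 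When $\partial_s$ hits the high-frequency profile $\partial^N a$, the equation produces $u\cdot\nabla\partial^N\mathcal{A}$, an $(N{+}1)$-derivative term. This is where the genuinely new idea sits: the paper writes the resulting trilinear expression in Fourier variables, symmetrizes by a change of variable $\xi\mapsto\xi-\nu$, and shows the combined symbol $M(\xi,\eta,\nu)=-\mu(\xi,\eta)(\eta+\nu)+[\mu(\xi,\eta)-\mu(\xi-\nu,\eta)]\xi$ is of order zero (Proposition~\ref{coot}); this is a normal-form analogue of the classical energy cancellation $\int u\cdot\nabla|f|^2=-\int(\nabla\cdot u)|f|^2$. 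Without this symbol-level cancellation the derivative loss after the time integration by parts is fatal, and your proposal does not account for it.
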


\begin{rem} A few observations on the hypotheses on the initial data:
\begin{itemize}
\item
The requirements on $An_0$ and $AB_0$ imply necessarily that 
$\int n_0 =$  and $\int B_0 = 0$. In particular this is consistent with the electric neutrality.  
\item
We did not try to optimize the number of derivatives in $L^2$ required ($N$), but rather aimed at a proof as simple as possible. On the other hand, the weight appearing above ($\langle x\rangle^{1+\alpha}$) seems nearly optimal; a more precise analysis would maybe allow $\langle x \rangle$ instead of $\langle x \rangle^{1+\alpha}$.
\end{itemize}
\end{rem}
 
The proof will be essentially split into two parts: controling the $H^N$ norm of $(u,n,E,B)$; and proving the decay in various norms. The former is achieved by an energy estimate; and the latter by the method of space-time resonances, which was introduced in \cite{GMS09imrn}. It was also used to prove global existence of small data solutions for  water waves \cite{GMS09cras,GMS09ww}.

\subsection{Stability of compressible Euler and related models in dimension 3}

It is instructive to compare the above results to earlier works on compressible Euler in dimension 3, or couplings of compressible Euler with various fields (electrostatic, electromagnetic, gravitational...). For all these models, a fundamental question is whether given data lead to blow up or a global solution.

A first class of results gives blow up for various types of data. The fundamental work is due to Sideris~\cite{Sid85}, who proved finite time blow up for compressible Euler; he was able to obtain this result for data arbitrarily close to the equilibrium state given by a zero velocity, and a constant density. Many results followed: finite time blow up was showed for the compressible Euler equation with compactly supported data by Makino, Ukai, and Kawashima~\cite{MUK86}; for the attractive Euler-Poisson equation with compactly supported data by Perthame~\cite{Pert90}; for the repulsive Euler-Poisson equation with compactly supported data by Makino and Perthame~\cite{MP90}; and for the relativistic compressible Euler equation by Guo and Tahvildar-Zadeh~\cite{GT99} and Pan and Smoller~\cite{PS06}. 

All of the aforementioned results rely on a non-constructive proof, and do not say much about the nature of the singularity. Recently, Christodoulou~\cite{Chr07} was able to describe in a very precise manner the blow up process for the relativistic compressible Euler equation.

Another line of research gives global existence (and scattering) for data close to the equilibrium state given by constant density, and all the fields (including the velocity) equal to zero. Such of result was first obtained by Guo~\cite{Guo99} for repulsive Euler-Poisson; and by Guo and Pausader~\cite{GP11} for the ion dynamics in Euler-Maxwell. In both cases, the curl of the data is assumed to be zero, and this condition is conserved by the flow of the equation. Finally, global existence for Euler-Maxwell with relaxation was obtained by Duan~\cite{Dua10}.

Focusing on the case of small data (i.e. close to an equilibrium), some common features emerge from the results which have been mentioned. Global existence is only known under the assumption that the flow is irrotational: this eliminates a mode which is linearly non-decaying. Under this assumption, a crucial point is then the nature of the linearized equation: roughly speaking, blow up may occur if it is a wave equation, whereas global existence is expected if it is a Klein-Gordon equation. The relevant difference between these two situations is that the latter gives a decay $\sim \frac{1} {t^{3/2}}$, whereas the former only decays $\sim \frac{1}{t}$.

In the case of Euler-Maxwell, which is treated in this paper, the condition $B= \nabla \times u$ is also meant to restrict the solution to the subspace along which the linearized problem is governed by Klein-Gordon equations. The novelty is that these Klein-Gordon equations have different speeds, making the nonlinear interaction more intricate.

\section{Notations}

\label{notations}

We shall use the following standard notations:
\begin{itemize}
\item $A \lesssim B$ if $A \leq C B$ for some implicit constant $C$. The value of $C$ may change from line to line.
\item $A \sim B$ means that both $A \lesssim B$ and $B \lesssim A$.
\item For any real number $\alpha$, the ``japanese brackets''  $\langle \cdot \rangle_{\alpha}$ stand for $\langle x \rangle_{\alpha} = \sqrt{1+\alpha^2 x^2}$. We also denote $ \langle x \rangle =\langle x \rangle_{1}$.
\item If $f$ is a function over $\mathbb{R}^3$ then its Fourier transform, denoted  by $\widehat{f}$, or $\mathcal{F} f$, is given by
$$
\widehat{f}(\xi) = \mathcal{F}f (\xi) = \frac{1}{(2\pi)^{3/2}} \int e^{-ix\xi} f(x) \,dx \;\;\;\;\mbox{thus} \;\;\;\;f(x) = \frac{1}{(2\pi)^{3/2}} \int e^{ix\xi} \widehat{f}(\xi) \,d\xi.
$$
In the text, we systematically drop the constants such as $\frac{1}{(2 \pi)^{3/2}}$ since they are not relevant.
\item The Fourier multiplier with symbol $m(\xi)$ is defined by
$$
m(D)f = \mathcal{F}^{-1} \left[m \mathcal{F} f \right].
$$
\item The bilinear pseudo-product with symbol $m(\xi,\eta)$ is given by its Fourier transform
$$
\mathcal{F}\left[ T_m(f,g) \right] (\xi) = \int m(\xi,\eta) \widehat{f}(\eta) \widehat{g}(\xi-\eta) \,d\eta.
$$
Similarly, the trilinear pseudo-product with symbol $m(\xi,\eta,\nu)$ is given by
$$
\mathcal{F}\left[ T_m(f,g,h) \right] (\xi) = \int m(\xi,\eta,\nu) \widehat{f}(\nu) \widehat{g}(\eta) \widehat{h}(\xi-\eta-\nu) \,d\eta\,d\nu.
$$
\item $H^N$ is given by the norm $\|f\|_{H^N} = \| \<D\>^N f \|_2$.
\item $W^{s,p}$ is given by the norm $\|f\|_{W^{s,p}} = \| \<D\>^{s} f \|_p$.
\end{itemize}

\section{A formulation adapted to energy estimates}

Our aim here is to rewrite the equation in such a way that its dispersive properties become more transparent, but energy estimates can also be easily obtained.

Split
$$
\left( \begin{array}{l} u \\ n \\ E \\ B \end{array} \right) = \left( \begin{array}{l} Qu \\ n \\ QE \\ 0 \end{array} \right) + \left( \begin{array}{l} Pu \\ 0 \\ PE \\ B \end{array} \right) \overset{def}{=} V_a + V_p
$$
where $V_p$ contains the unknowns which (in the linearization~(\ref{oiseau})) propagate as electromagnetic waves, and $V_a$ the unknowns which (still in the linearization~(\ref{oiseau})) propagate as acoustic waves.

\subsection{The acoustic system}

We focus here on the evolution of $V_a = \left( Qu \,,\, n \,,\, QE \,,\, 0 \right)$. It is governed by the system
$$
\left\{ \begin{array}{l} \partial_t Qu = -QE - \nabla \frac{|u|^2}{2} - c_s^2 \rho \nabla \rho \\ \partial_t n = - \nabla \cdot (\rho u) \\ \nabla \cdot E = -n. \end{array} \right.
$$
In order to diagonalize this system, let us switch to the unknown function
$$
\mathcal{A} = \frac{1}{2} \left( \frac{\langle D \rangle_{c_s}}{|D|} n +  i \frac{\nabla}{|D|} \cdot u \right)
$$
so that
$$
Qu = - 2 \frac{\nabla}{|D|} \frak{Im} \mathcal{A} \;\;\;\;\;\mbox{and}\;\;\;\;\; n = 2 \frac{|D|}{\langle D \rangle_{c_s}} \frak{Re} \mathcal{A}.
$$
The evolution of $\mathcal{A}$ is given by
\begin{equation}\label{Aequat}
 2 \partial_t \mathcal{A} = 2  i \langle D \rangle_{c_s} \mathcal{A} - 
 \frac{\langle D \rangle_{c_s} \nabla}{|D|} \cdot (n u)  + \frac{i |D|}2  \left( \left| u \right|^2 + c_s^2 |n|^2 \right).
\end{equation}

\subsection{The Maxwell (or electromagnetic) system}

We focus here on the evolution of $V_p = \left( Pu \,,\, 0 \,,\, PE \,,\, B \right)$. By (\ref{curlfree}), it suffices to consider $PE$ and $B$. These fields are governed by the equations
\begin{equation*}
\left\{ \begin{array}{l} \partial_t B = - \nabla \times E \\ \partial_t PE = \nabla \times B + P(\rho u) \end{array} \right.
\end{equation*}
which implies
$$
\partial_t^2 B - \Delta B + B = - \nabla \times (n u) .
$$
Setting
$$
\mathcal{B} = \frac{\partial_t}{|D|} B + i \frac{\<D\>}{|D|} B,
$$
it satisfies
$$
\partial_t \mathcal{B} - i \langle D \rangle \mathcal{B} = - \frac{\nabla}{|D|} \times (n u),
$$
and the original unknown functions $Pu$, $PE$ and $B$ can be recovered by
$$
Pu = \frac{\nabla}{|D|\<D\>} \times \frak{Im} \mathcal{B} \;\;\;\;,\;\;\;\;
PE = -\frac{\nabla}{|D|} \times \frak{Re} \mathcal{B} \;\;\;\;\;\;\mbox{and}\;\;\;\;\;\; B = \frac{|D|}{\<D\>} \frak{Im} \mathcal{B}.
$$

\subsection{Summarizing}

The Euler-Maxwell system now reads
$$
(EM') \quad \left\{ \begin{array}{l} \partial_t \mathcal{A} - i \langle D \rangle_{c_s} \mathcal{A} = -\frac{1}{2} \frac{\langle D \rangle \nabla}{|D|} \cdot (n u) + \frac{1}{4} i |D| \left( \left| u \right|^2 + |n|^2 \right) \\ \partial_t \mathcal{B} - i \langle D \rangle \mathcal{B} = - \frac{\nabla}{| D |} \times (n u) \\ (\mathcal{A},\mathcal{B})(t=0)=(\mathcal{A}_0,\mathcal{B}_0) \end{array} \right.
$$
with
$$
\left\{ \begin{array}{l} Qu = - 2 \frac{\nabla}{|D|} \frak{Im} \mathcal{A} \\ n = 2 \frac{|D|}{\langle D \rangle_{c_s} } \frak{Re} \mathcal{A} \\ Pu = \frac{\nabla}{|D|\<D\>} \times \frak{Im} \mathcal{B}. \end{array} \right.
$$
The data $(\mathcal{A}_0,\mathcal{B}_0)$ of $(EM')$ are easily expressed in terms of the data $(u_0,n_0,E_0,B_0)$ of $(EM)$:
$$
\mathcal{A}_0 \overset{def}{=} \frac{1}{2} \left( \frac{\langle D \rangle_{c_s}}{|D|} n_0 +  i \frac{\nabla}{|D|} \cdot u_0 \right) \;\;\;\;\mbox{and} \;\;\;\;\mathcal{B}_0 = - \frac{\nabla}{|D|} \times E_0 + i \frac{\<D\>}{|D|} B_0.
$$
Let us finally define the profiles of $\mathcal{A}$ and $\mathcal{B}$
$$
a(t) \overset{def}{=} e^{-it\<D\>_{c_s}} \mathcal{A}(t) \;\;\;\;\mbox{and}\;\;\;\; b(t) \overset{def}{=} e^{-it\<D\>} \mathcal{B}(t).
$$

\section{A formulation adapted to decay estimates}

As we saw, the system $(EM')$ written above is equivalent to $(EM)$; it will be the correct formulation to perform energy estimates. However, as far as dispersive estimates go, we will not need all the structure of $(EM')$: only resonances will play an important r\^ole. It will be convenient to write $(EM')$ in a more compact form.

\subsection{Duhamel's formula in Fourier space}

\label{duhamel}

Writing Duhamel's formula in terms of $a$ and $b$ gives
$$
\left\{ \begin{array}{l} a(t) = \mathcal{A}_0 + \int_0^t e^{-is \langle D \rangle_{c_s}} \left[ -\frac{1}{2} \frac{\langle D \rangle \nabla}{|D|} \cdot (n u) + \frac{1}{4} i |D| \left( \left| u \right|^2 + |n|^2 \right) \right] \,ds \\
b(t) = \mathcal{B}_0 - \int_0^t e^{-is \langle D \rangle} \left[\frac{\nabla}{| D |} \times (n u)\right]\,ds.
\end{array} \right.
$$
Taking the Fourier transform gives
$$
\left\{ \begin{array}{l}
\widehat{a}(t,\xi) = \widehat{\mathcal{A}_0}(\xi) + \mbox{``nonlinear term''} \\
\widehat{b}(t,\xi) = \widehat{\mathcal{B}_0}(\xi) + \mbox{``nonlinear term''}.
\end{array} \right.
$$
In order to make notations lighter and estimates easier, we will now give up some of the structure of the above system. 

\begin{conv}
We will denote indifferently $\mathcal{C}(t)$ for $\mathcal{A}(t)$ or $\mathcal{B}(t)$, or their complex conjugates, and $c(t)$ for $a(t)$ or $b(t)$, or their complex conjugates. Similarly, we denote $e^{\pm it \langle D \rangle_\ell}$ for any of the groups $e^{it \langle D \rangle}$, $e^{-it \langle D \rangle}$, $e^{it \langle D \rangle_{c_s}}$, or $e^{-it \langle D \rangle_{c_s}}$.

For instance, $u$ or $n$ are a linear combination of terms of the type $e^{\pm it \langle D \rangle_\ell} c$.

It is always understood that, in an expression of the form $e^{\pm it \langle D \rangle_\ell} c(t)$, the meaning of $e^{\pm it \langle D \rangle_\ell}$ is consistent with that of $c$. For instance, if $c$ stands for $a$, then $e^{\pm it \langle D \rangle_\ell}$ stands for $e^{it\langle D \rangle_{c_s}}$.
\end{conv}

With this convention, it is easy to see from the above that the ``nonlinear terms'' can all be written as a linear combination of terms of the following type (which we denote generically by $g$)
\begin{equation}
\label{pinguin}
\widehat{g}(t,\xi) = \int_0^t  \int e^{is\phi(\xi,\eta)} m(\eta,\xi) \widehat{c}(\eta,s) \widehat{c}(\xi-\eta,s) \,  
 d\eta  \, ds,
\end{equation}
where $m$ is such that
\begin{equation}
\label{eider}
m(\xi,\eta) = m_0(\xi) m_1(\eta) m_2(\xi - \eta) \quad\quad \mbox{with} \left\{ \begin{array}{l} \left|\partial_\xi^\alpha m_0(\xi) \right| \lesssim \frac{1}{|\xi|^{|\alpha|}} \quad \mbox{if $|\xi| \leq 1$} \\ \left|\partial_\xi^\alpha m_0(\xi) \right| \lesssim \frac{1}{|\xi|^{|\alpha|-1}} \quad \mbox{if $|\xi| \geq 1$} \\ \left|\partial_\xi^\alpha m_1(\xi) \right|,\;\left|\partial_\xi^\alpha m_2(\xi) \right| \lesssim \frac{1}{|\xi|^{|\alpha|}} \quad \mbox{for any $\xi$} \end{array} \right.
\end{equation}
and $\phi$ is one of the $\phi^{\epsilon_1, \epsilon_2}_{k,\ell,m}$ given by
$$
\phi^{\epsilon_1, \epsilon_2}_{k,\ell,m}(\xi,\eta) \overset{def}{=} \langle \xi \rangle_{k} + \epsilon_1 \langle \eta \rangle_{\ell} + \epsilon_2 \langle \xi - \eta \rangle_{m}
$$
where $\epsilon_1, \epsilon_2 = \pm$ and $k,\ell,m$ are either $1$ or $c_s$.

\subsection{Space-time resonances in the context of Euler-Maxwell}

\label{wigeon}

Seeing~(\ref{pinguin}) as an oscillatory integral, it becomes clear that the cancellation properties of $\phi$ and $\partial_\eta \phi$ will provide a key to understanding the large time behaviour of our system: this is the idea of space time resonances. See~\cite{PG1} for a general presentation, and~\cite{PG} for the case of (semilinear) Klein-Gordon equations with different propagation speeds.

Recall that the phase functions corresponding to all possible quadratic interactions are given by
\begin{equation} \label{phase}
\phi^{\epsilon_1, \epsilon_2}_{k,\ell,m}(\xi,\eta) \overset{def}{=} \langle \xi \rangle_{k} + \epsilon_1 \langle \eta \rangle_{\ell} + \epsilon_2 \langle \xi - \eta \rangle_{m}
\end{equation}
Next define for each interaction the space, time, and space-time resonant sets
\begin{equation*}
\begin{split}
& \mathcal{S}^{\epsilon_1, \epsilon_2}_{k,\ell,m} \overset{def}{=} \{  (\xi, \eta) \, |\,  \phi^{\epsilon_1, \epsilon_2}_{k,\ell,m} = 0 \} \;\;\;\;\;\;\mbox{(''space resonances")}\\ 
& \mathcal{T}^{\epsilon_1, \epsilon_2}_{k,\ell,m} \overset{def}{=} \{   (\xi, \eta)  \, | \,  \partial_\eta \phi^{\epsilon_1, \epsilon_2}_{k,\ell,m} = 0 \} \;\;\;\;\;\;\mbox{(''time resonances")}\\
& \mathcal{R}^{\epsilon_1, \epsilon_2}_{k,\ell,m} \overset{def}{=} \mathcal{S}^{\epsilon_1, \epsilon_2}_{k,\ell,m} \cap \mathcal{T}^{\epsilon_1, \epsilon_2}_{k,\ell,m}. \;\;\;\;\;\;\mbox{(''space-time resonances")}
\end{split}
\end{equation*}
The set of all space-time resonances is
$$
\mathcal{R} = \cup_{\epsilon_1, \epsilon_2,k,\ell,m} \mathcal{R}^{\epsilon_1, \epsilon_2}_{k,\ell,m};
$$
it is compact and hence it is bounded. We denote by $ C_\mathcal{R} -1  $ 
the radius of a ball that contains $\mathcal{R}  $.
Finally, define the outcome and germ, or source frequencies for space-time resonances: these are simply the projections of $\mathcal{R}^{\epsilon_1, \epsilon_2}_{k,\ell,m}$ in the $\xi$ variable, respectively the union of the projections in the $\eta$ and $\xi-\eta$ variables. More precisely if $\pi_\xi(\xi',\eta') = \xi'$, $\pi_\eta(\xi',\eta') = \eta'$ and $\pi_{\xi-\eta}(\xi',\eta') = \xi'-\eta'$, we set
\begin{equation*}
\begin{split}
& \mathcal{O} \overset{def}{=} \pi_\xi \left( \mathcal{R} \right) \\
& \mathcal{G} \overset{def}{=} \pi_\eta \left( \mathcal{R}\right) \cup \pi_{\xi-\eta} \left( \mathcal{R} \right).
\end{split}
\end{equation*}

\begin{df} 
\label{bluebird}
Space-time resonances are said to be separated if no outcome frequency is also a germ frequency. In mathematical terms,
$ \displaystyle \mathcal{G} \cap \mathcal{O} = \emptyset $.
\end{df}

\section{Some linear and bilinear cutoff Fourier multipliers}

\label{sectionmachinery}

We overtake here some of the cut-off functions defined in~\cite{PG}; see Proposition~\ref{toucan} for results on the boundedness of the associated operators.

% \subsection{Littlewood-Paley decomposition}
% 
% \label{LP}
% 
% Consider $\psi$ a function supported in the annulus $\mathcal{C}(0,\frac{3}{4},\frac{8}{3})$ such that
% $$
% \mbox{for $\xi \neq 0$,}\;\;\;\;\sum_{j \in \mathbb{Z}} \psi \left( \frac{\xi}{2^j} \right) = 1 .
% $$
% Define first
% $$
% \Phi(\xi) \overset{def}{=} \sum_{j <0} \psi \left( \frac{\xi}{2^j} \right)
% $$
% and then the Fourier multipliers
% $$
% P_j \overset{def}{=} \psi \left( \frac{D}{2^j} \right) \;\;\;\;\; P_{<j} = \Phi \left( \frac{D}{2^j} \right).
% $$
% This gives a homogeneous and an inhomogeneous decomposition of the identity (for instance, in $L^2$)
% $$
% \sum_{j \in \mathbb{Z}} P_j = \operatorname{Id} \;\;\;\;\mbox{and}\;\;\;\;\;P_{<0} + \sum_{j \geq 0} P_j = \operatorname{Id}.
% $$
% All these operators are bounded on $L^p$ spaces:
% $$
% \mbox{if $1<p<\infty$,}\;\;\;\; \|P_j f \|_p \lesssim \|f\|_p \;\;\;\;,\;\;\;\; \|P_{<j} f \|_p \lesssim \|f\|_p.
% $$
% It is easy to see that
% \begin{equation}
% \label{deltajsobolev}
% \mbox{if $j \geq 0$,}\;\;\;\;\|P_j f\|_{W^{s,p}} \sim 2^{js} \|f\|_{p} \;\;\;\;\mbox{and}\;\;\;\;\|P_{<j} f \|_{W^{s,p}} \lesssim 2^{js} \|f\|_{p}.
% \end{equation}
% Also recall Bernstein's lemma: if $1\leq q\leq p \leq \infty$,
% \begin{equation}
% \label{lemmadeltaj}
% \|P_j f \|_p \leq 2^{3j\left( \frac{1}{q}-\frac{1}{p} \right)} \left\| P_j f \right\|_q\;\;\;\;\;\;\mbox{and}\;\;\;\;\; \left\| P_{<j} f \right\|_p \leq 2^{3j\left( \frac{1}{q}-\frac{1}{p} \right)} \left\| P_{<j} f \right\|_q .
% \end{equation}

\subsection{Low or high frequency cutoff: $Z_l$, $Z_h$}

First pick $M_0$ large enough (the precise value of $M_0$ will be fixed in the following, for the moment it is simply $\geq C_\mathcal{R}$ defined in Section~\ref{wigeon}).

It will be necessary in the proof to distinguish between high and low frequencies. To this end, we introduce the cut off function $\theta(\xi,\eta)$, which is such that
\begin{equation}
\label{colvert}
\theta \in \mathcal{C}^\infty_0\;\;\;\;,\;\;\;\;\theta = 1 \;\mbox{on $B(0,1)$}\;\;\;\;\mbox{and}\;\;\;\;\theta = 0 \;\mbox{on $B(0,2)^c$}.
\end{equation}
Restricting to high, respectively low frequencies, is achieved by the operators
$$
Z_h \overset{def}{=} 1-\theta\left( \frac{D}{M_0} \right) \quad\quad Z_l \overset{def}{=} \theta \left( \frac{D}{M_0} \right) .
$$
% and its low frequency counterpart is

\subsection{Cutoff for $\mathcal{O}$: the operators $Z_\mathcal{O}$, $\widetilde{Z}_{\mathcal{O}}$}

Recall that $\mathcal{O}$ and $\mathcal{G}$ were defined in Section~\ref{wigeon}.

Under the resonance separation condition (definition~\ref{bluebird}), it is possible to find $\delta_0$ such that no frequency in $B_{10\delta_0}(\mathcal{O})$ (a $10\delta_0$-neighbourhood of $\mathcal{O}$) is a source of a space-time resonance:
$$
B_{10\delta_0}(\mathcal{O}) \cup \mathcal{G} = \emptyset.
$$
Define $\chi_\mathcal{O}$ a smooth cut-off function such that 
\begin{equation*}
\begin{split}
& \chi_\mathcal{O} = 1 \;\;\;\mbox{on $B_{\delta_0/2}(\mathcal{O})$} \\
& \chi_\mathcal{O} = 0 \;\;\;\mbox{outside of $B_{\delta_0}(\mathcal{O})$}
\end{split}
\end{equation*}
and then let $\widetilde{\chi}_{\mathcal{O}}$ satisfy
$$
\chi_\mathcal{O} + \widetilde{\chi}_{\mathcal{O}} = 1.
$$
The corresponding operators are
$$
Z_\mathcal{O} \overset{def}{=} \chi_\mathcal{O}(D) \;\;\;\;\;\mbox{and}\;\;\;\;\; \widetilde{Z}_\mathcal{O} \overset{def}{=} \widetilde{\chi}_\mathcal{O}(D).
$$

\subsection{Cutoff for $\mathcal{S}$ and $\mathcal{T}$: the symbols $\chi_\mathcal{S}$ and $\chi_\mathcal{T}$}

The cut-off functions which we are about to define will, for a given set of indices $\epsilon_1,\epsilon_2,k,\ell,m$ separate $\mathcal{T}^{\epsilon_1,\epsilon_2}_{k,\ell,m}$ from $\mathcal{S}^{\epsilon_1,\epsilon_2}_{k,\ell,m}$; of course this can only be done away from a neighbourhood of $\mathcal{R}^{\epsilon_1,\epsilon_2}_{k,\ell,m}$, where these two sets intersect. Dropping for simplicity the indices, the function $\chi_\mathcal{S}$ localizes away from $\mathcal{T}$, whereas $\chi_\mathcal{T}$ localizes away from $\mathcal{S}$. Since $\mathcal{T}=\{ \phi = 0 \}$ whereas $\mathcal{S} = \{ \partial_\eta \phi = 0 \}$, this explains the inequalities~(\ref{rossignol2}).

\begin{lem}
\label{aigle2}
For each set of indices $\epsilon_1,\epsilon_2, k,\ell,m$, it is possible to find cut-off functions
$$
\chi_{\mathcal{T}^{\epsilon_1,\epsilon_2}_{k,\ell,m}}(\xi,\eta) \;\;,\;\;\chi_{\mathcal{S}^{\epsilon_1,\epsilon_2}_{k,\ell,m}}( \xi,\eta)
$$
such that (in the following, we drop the indices $\epsilon_1,\epsilon_2 , k,\ell,m$ for simplicity)
\begin{itemize}
\item $\chi_\mathcal{T}$, $\chi_\mathcal{S}$ are smooth.
\item Their sum equals one away from $\mathcal{R}$: $\chi_{\mathcal{T}} + \chi_{\mathcal{S}} = 1$ if $\operatorname{dist}((\xi,\eta),\mathcal{R})>\delta_0/10$.
\item The derivatives of $\frac{\chi_{\mathcal{S}}}{\phi}$ and $\frac{\chi_{\mathcal{T}} \partial_\eta \phi}{|\partial_\eta \phi|^2}$ satisfy
\begin{equation}
\label{rossignol2}
\mbox{ if $|\alpha| \leq 20$,  then } \;\;\;\;
\left| \partial_{\xi,\eta}^{\alpha} \frac{\chi_{\mathcal{S}}}{\phi} \right| \;,\;
\left| \partial_{\xi,\eta}^{\alpha} \frac{\chi_{\mathcal{T}} \partial_\eta \phi}{|\partial_\eta \phi|^2} \right| \lesssim |\xi,\eta|^{n_0}
\end{equation}
for some  integer $n_0$.
\end{itemize}
\end{lem}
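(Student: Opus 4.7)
The plan is to define $\chi_\mathcal{S}$ and $\chi_\mathcal{T}$ as a smooth partition of unity weighted by the relative sizes of $\phi^2$ and $|\partial_\eta\phi|^2$, after excising a small neighbourhood of the common zero set $\mathcal{R}$. Fix a smooth $\widetilde\psi$ which equals $0$ on the $\delta_0/20$-neighbourhood of $\mathcal{R}^{\epsilon_1,\epsilon_2}_{k,\ell,m}$ and equals $1$ outside its $\delta_0/10$-neighbourhood, and set
$$
\chi_\mathcal{S} := \widetilde\psi\,\frac{\phi^2}{\phi^2+|\partial_\eta\phi|^2}, \qquad \chi_\mathcal{T} := \widetilde\psi\,\frac{|\partial_\eta\phi|^2}{\phi^2+|\partial_\eta\phi|^2}.
$$
On the support of $\widetilde\psi$ the functions $\phi$ and $\partial_\eta\phi$ do not vanish simultaneously, so the denominator is strictly positive and both $\chi_\mathcal{S},\chi_\mathcal{T}\in C^\infty$; they plainly sum to $\widetilde\psi$, which equals $1$ outside the $\delta_0/10$-neighbourhood of $\mathcal{R}$, securing the first two bullet points.

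For the third bullet, the extra power of $\phi$, resp.\ $\partial_\eta\phi$, in the numerator cancels the factor one divides by,
$$
\frac{\chi_\mathcal{S}}{\phi} \;=\; \widetilde\psi\,\frac{\phi}{\phi^2+|\partial_\eta\phi|^2}, \qquad \frac{\chi_\mathcal{T}\,\partial_\eta\phi}{|\partial_\eta\phi|^2} \;=\; \widetilde\psi\,\frac{\partial_\eta\phi}{\phi^2+|\partial_\eta\phi|^2},
$$
so no genuine division by zero ever takes place. Applying Leibniz and the quotient rule, a derivative $\partial^\alpha_{\xi,\eta}$ with $|\alpha|\le 20$ of either expression becomes a finite sum of terms of the schematic shape $(\partial^{\gamma}\widetilde\psi)\,P(\partial^{\le|\alpha|+1}\phi)/(\phi^2+|\partial_\eta\phi|^2)^{j}$ with $j\le|\alpha|+1$, and each $\partial^\beta\phi$ grows at worst like $\langle\xi,\eta\rangle$. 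Hence (\ref{rossignol2}) reduces to a polynomial lower bound of the form $\phi^2+|\partial_\eta\phi|^2\gtrsim\langle\xi,\eta\rangle^{-M}$ on $\mathrm{supp}\,\widetilde\psi$.

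This lower bound is the main obstacle, and really the only nontrivial step. On the bounded portion of $\mathrm{supp}\,\widetilde\psi$ it follows from compactness, continuity, and the fact that the support lies at distance $\ge \delta_0/20$ from $\mathcal{R}=\{\phi=\partial_\eta\phi=0\}$. At infinity one exploits the explicit structure $\phi^{\epsilon_1,\epsilon_2}_{k,\ell,m}(\xi,\eta)=\langle\xi\rangle_k+\epsilon_1\langle\eta\rangle_\ell+\epsilon_2\langle\xi-\eta\rangle_m$ together with the compactness of $\mathcal{R}$: for $|(\xi,\eta)|$ large, either the approximate triangle-type identity required to keep $|\phi|$ small fails, which gives $|\phi|\gtrsim\langle\xi,\eta\rangle$, or it does hold, in which case a direct computation of $\partial_\eta\phi=\epsilon_1\ell^2\eta/\langle\eta\rangle_\ell-\epsilon_2 m^2(\xi-\eta)/\langle\xi-\eta\rangle_m$ on the resulting asymptotic configuration shows $|\partial_\eta\phi|\gtrsim 1$. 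A covering argument combining the bounded and large-frequency regimes then yields the required polynomial lower bound, and hence the existence of an integer $n_0$ for which (\ref{rossignol2}) holds.
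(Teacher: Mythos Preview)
The paper does not actually prove this lemma; Section~\ref{sectionmachinery} imports it from~\cite{PG}. Your explicit construction $\chi_\mathcal{S}=\widetilde\psi\,\phi^2/(\phi^2+|\partial_\eta\phi|^2)$, $\chi_\mathcal{T}=\widetilde\psi\,|\partial_\eta\phi|^2/(\phi^2+|\partial_\eta\phi|^2)$ is a clean choice: it gives the first two bullet points immediately, and your reduction of~(\ref{rossignol2}) to a polynomial lower bound $\phi^2+|\partial_\eta\phi|^2\gtrsim\langle\xi,\eta\rangle^{-M}$ on $\operatorname{supp}\widetilde\psi$ is correct.

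The gap is in the large-frequency dichotomy. You claim that for $|(\xi,\eta)|$ large either $|\phi|\gtrsim\langle\xi,\eta\rangle$ or $|\partial_\eta\phi|\gtrsim 1$. This is false in the equal-speed case $k=\ell=m$, which does occur here (e.g.\ the $|n|^2$ or $|Qu|^2$ contributions in the $\mathcal{A}$-equation give $k=\ell=m=c_s$). Take $\epsilon_1=\epsilon_2=-$ and $\eta=\xi/2$: then
\[
\partial_\eta\phi=-\frac{\eta}{\langle\eta\rangle_k}+\frac{\xi-\eta}{\langle\xi-\eta\rangle_k}=0
\]
identically, while $\phi=\langle\xi\rangle_k-2\langle\xi/2\rangle_k\sim -C/|\xi|$ as $|\xi|\to\infty$. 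Along this entire ray both quantities are small and $\phi^2+|\partial_\eta\phi|^2\sim|\xi|^{-2}$, not $\gtrsim 1$. (Note that $\mathcal{R}^{\,-,-}_{k,k,k}$ is empty here, so $\widetilde\psi\equiv 1$ and you genuinely need the bound along this ray.)

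This does not invalidate the lemma---$|\xi|^{-2}$ \emph{is} a polynomial lower bound---but it does invalidate your argument for it. A correct treatment must track this degeneration: in the equal-speed case the asymptotic space-resonant set is $\{\eta\parallel\xi-\eta\}$, and on it the leading term of $\phi$ cancels but the subleading $O(1/|\xi|)$ term is explicitly computable and nonvanishing. Carrying out that expansion (or, more abstractly, compactifying and appealing to a \L ojasiewicz inequality for the semi-algebraic function $\phi^2+|\partial_\eta\phi|^2$) supplies the missing bound with, say, $M=2$. Either route closes the argument, but neither is the one-line dichotomy you stated.
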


\subsection{Paraproduct decomposition: the symbols $\zeta^1$ and $\zeta^2$}

Following the original idea of Bony~\cite{B}, we would like to distinguish between regions where $|\eta| \gtrsim |\xi-\eta|$, respectively $|\xi-\eta|\gtrsim |\eta|$. 

We pick two functions $\zeta^1(\xi,\eta)$ and $\zeta^2(\xi,\eta)$ such that
\begin{itemize}
\item $\zeta^2$ and $\zeta^1$ are smooth.
\item $\zeta^2$ and $\zeta^1$ are homogeneous of degree zero outside of $B(0,1)$.
\item $\zeta^2(\xi,\eta) + \zeta^1(\xi,\eta) = 1$ for any $(\xi,\eta)$.
\item If $|(\xi,\eta)| \geq 1$ and $(\xi,\eta) \in \operatorname{Supp}(\zeta^1)$, then $|\xi-\eta|\leq c |\eta|$ for a constant $c$.
\item If $|(\xi,\eta)| \geq 1$ and $(\xi,\eta) \in \operatorname{Supp}(\zeta^2)$, then $|\eta |\leq c |\xi-\eta|$ for a constant $c$.
\end{itemize}

\section{The a priori estimates and plan of the proof}

In order to prove Theorem~\ref{mainthm}, we will prove the following a priori estimates, valid if $\epsilon$ is small enough.

\bigskip
\noindent
\underline{Energy estimate}
\begin{itemize}
\item $\displaystyle \left\| (\mathcal{A},\mathcal{B}) \right\|_{H^N} \lesssim \epsilon  \<t\>^{C_0 \epsilon}$ for a constant $C_0$, and any $t$ (regularity in $L^2$).
\end{itemize}
\underline{Decay estimates}
\begin{itemize}
\item $\displaystyle \left\| (\mathcal{A},\mathcal{B}) \right\|_{W^{N'',\left(\frac{1}{3}-\delta_1 \right)^{-1}}} \lesssim \frac{\epsilon}{\<t\>^{\frac{1}{2}+3\delta_1}}$ (square integrable decay above $L^3$).
\item $\displaystyle \left\| \widetilde{Z}_{\mathcal{O}} \left( \mathcal{A},\mathcal{B} \right) \right\|_{W^{N'',\left( \frac{1}{6}+\delta_1 \right)^{-1}}} \lesssim \frac{\epsilon}{\<t\>^{1 -3 \delta_1}}$ (decay slightly below $L^6$ for ``non-outcome'' frequencies).
\item $\displaystyle \left\| \widetilde{Z}_{\mathcal{O}} \left( \mathcal{A},\mathcal{B} \right) \right\|_{W^{2,\infty}} , \left\| \widetilde{Z}_{\mathcal{O}} \left( u,n \right) \right\|_{W^{2,\infty}} \lesssim \frac{\epsilon}{\<t\>}$ (decay $\sim \frac{1}{t}$ in $L^\infty$ for ``non-outcome'' frequencies).
\end{itemize}
\underline{Localization estimates}
\begin{itemize}
\item $\displaystyle \left\| |x| (a,b) \right\|_{H^{N'}} \lesssim \epsilon \sqrt{\<t\>}$ (localization in $H^{N'}$)
\item $\displaystyle \left\| |x|^{1/8} \widetilde{Z}_{\mathcal{O}} (a,b) \right\|_2 \lesssim \epsilon$ (localization in $L^2$ for ``non-outcome'' frequencies).
\end{itemize}
The constants $N$, $N'$, $N''$ are chosen such that $N - N_1> N'' - N_1 > N'  >N_1$, for a sufficiently big constant $N_1$; in particular, $N$ is sufficiently big for the necessary arguments in~\cite{PG} to apply. The constant $\delta_1$ is chosen sufficiently small for the necessary parts of the argument in~\cite{PG} to apply. We call $\|\cdot\|_X$ the norm corresponding to the above quantities:
\begin{equation*}
\begin{split}
\|(\mathcal{A},\mathcal{B})\|_X & \overset{def}{=} \sup_t \left[    \<t\>^{-C_0 \epsilon} \left\| (\mathcal{A},\mathcal{B}) \right\|_{H^N} + \<t\>^{\frac{1}{2}+3\delta_1} \left\| (\mathcal{A},\mathcal{B}) \right\|_{W^{N'',\left(\frac{1}{3}-\delta_1\right)^{-1}}} + \<t\>^{1-3\delta_1} \left\| (\mathcal{A},\mathcal{B}) \right\|_{W^{N'',\left( \frac{1}{6}+\delta_1 \right)^{-1}}} \right.\\ 
& + \left. \<t\> \left\| \widetilde{Z}_{\mathcal{O}} (\mathcal{A},\mathcal{B},u,n) \right\|_{W^{2,\infty}} + \frac{1}{\sqrt{\<t\>}} \left\| |x| (a,b) \right\|_{H^{N'}} + \left\| |x|^{1/8} \widetilde{Z}_{\mathcal{O}} (a,b) \right\|_2 \right]
\end{split}
\end{equation*}

Since local well posedness is easily dealt with, and the data are chosen such that 
$$
\|(e^{it\<D\>_{c_s}} \mathcal{A}_0 , e^{it\<D\>} \mathcal{B}_0)\|_X \lesssim \epsilon,
$$
the proof of the theorem consists in proving the following a priori estimate:
$$
\|(\mathcal{A},\mathcal{B})\|_X \lesssim \|(e^{it\<D\>_{c_s}} \mathcal{A}_0 , e^{it\<D\>} \mathcal{B}_0)\|_X + \|(\mathcal{A},\mathcal{B})\|_X^2 + \|(\mathcal{A},\mathcal{B})\|_X^3.
$$
We will proceed by showing that all the quantities appearing in the definition of $X$ can be controlled by the above right-hand side. More precisely, the plan is as follows
\begin{itemize}
\item Decay estimates are proved in Section~\ref{sectiondecay}.
\item Localization estimates are proved in Section~\ref{sectionlocalization}.
\item The energy estimate for $\mathcal{A}$: $\sup_t   \<t\>^{-C_0 \epsilon}  \| \mathcal{A}(t) \|_{H^N} \lesssim \epsilon$ is proved in Section~\ref{acoustic}.
\item The energy estimate for $\mathcal{B}$: $\sup_t    \<t\>^{-C_0 \epsilon}  \| \mathcal{B}(t) \|_{H^N} \lesssim \epsilon$ is proved in Section~\ref{plasma}.
\item Finally, in section~\ref{scat}  we give a sketch of the proof of the scattering. 
\end{itemize}

\section{Decay estimates}

\label{sectiondecay}

We want to prove here that
\begin{equation}
\label{swan}
\begin{split}
& \sup_t \left[ \<t\>^{\frac{1}{2}+3\delta_1} \left\| (\mathcal{A},\mathcal{B}) \right\|_{W^{N'',\left(\frac{1}{3}-\delta_1\right)^{-1}}} + \<t\>^{1-3\delta_1} \left\| \widetilde{Z}_\mathcal{O} (\mathcal{A},\mathcal{B}) \right\|_{W^{N'',\left(\frac{1}{6}+\delta_1\right)^{-1}}} + 
\<t\> \left\| \widetilde{Z}_\mathcal{O} (\mathcal{A},\mathcal{B},u,n) \right\|_\infty \right] \\
& \quad \quad \quad \quad \quad \quad \lesssim \left\| (e^{it\<D\>_{c_s}}\mathcal{A}_0 + e^{it\<D\>}\mathcal{B}_0 ) \right\|_X + \left\|(\mathcal{A},\mathcal{B} )\right\|_X^2.
\end{split}
\end{equation}

\subsection{Control of the $W^{N'',\left(\frac{1}{6}+\delta_1\right)^{-1}}$ and $W^{N'',\left(\frac{1}{3}-\delta_1\right)^{-1}}$ norms}

The two first norms in~(\ref{swan}) above can be controlled as in~\cite{PG}:
\begin{itemize}
\item As far as the control of the $W^{N'',\left(\frac{1}{6}+\delta_1\right)^{-1}}$ norm goes, the main difference between the Euler-Maxwell system and the setting of~\cite{PG} is the quasilinearity of Euler-Maxwell. This induces a further loss of regularity in the nonlinear term, which is however easily absorbed using the $H^N$ norm.
\item The estimate for the $W^{N'',\left(\frac{1}{3}-\delta_1\right)^{-1}}$ norm is a low frequency question (since it is only problematic on $\mathcal{O}$). Therefore, the argument of~\cite{PG} applies identically.
\end{itemize}
We do not detail these two points, and focus directly on the third norm in~(\ref{swan}). 

\subsection{Control of the $W^{2,\infty}$ norm}

Proceeding as in Subsection~\ref{duhamel}, we can derive a generic term $g$ corresponding to the nonlinear term in Duhamel's formula for $u$ and $n$. It turns out, since $u$ and $n$ are given from $\mathcal{A}$ and $\mathcal{B}$ by the action of a Fourier multiplier, that this $g$ would satisfy exactly the properties listed in Subsection~\ref{duhamel}.

Thus all we need to do is to prove that, for $g$ as in Subsection~\ref{duhamel},
$$
\left\| \widetilde{Z}_\mathcal{O} e^{it\<D\>_\ell} g(t) \right\|_{W^{2,\infty}} \lesssim \frac{1}{\<t\>} \left\|(\mathcal{A},\mathcal{B})\right\|_X^2.
$$
In order to prove this, we shall split $\widetilde{Z}_\mathcal{O} e^{it\<D\>_\ell} g(t)$ as follows
\begin{subequations}
\begin{align} \label{eagle1}
\mathcal{F} ( \widetilde{Z}_\mathcal{O} e^{it\<D\>_\ell} g(t) ) = & \int_0^1 \int \widetilde{\chi}_\mathcal{O}(\xi)  e^{is\phi} m(\xi,\eta) \widehat{c}(s,\eta) \widehat{c}(s,\xi-\eta) \,d\eta\,ds \\
\label{eagle2}
& + \int_1^t \int \widetilde{\chi}_\mathcal{O}(\xi) \chi_\mathcal{S}(\xi,\eta) e^{is\phi} m(\xi,\eta) \widehat{c}(s,\eta) \widehat{c}(s,\xi-\eta) \,d\eta\,ds \\
\label{eagle3}
& + \int_1^t \int \widetilde{\chi}_\mathcal{O}(\xi) \chi_\mathcal{T}(\xi,\eta) e^{is\phi} m(\xi,\eta) \widehat{c}(s,\eta) \widehat{c}(s,\xi-\eta) \,d\eta\,ds
\end{align}
\end{subequations}

In the above, we have used the cut-off functions $\chi_\mathcal{S}$ and $\chi_\mathcal{T}$. Remember that these were defined in~\ref{aigle2} depending on the quadratic interaction considered; they were therefore labeled $\chi_{\mathcal{S}^{k,l,m}_{\epsilon_1,\epsilon_2}}$ and $\chi_{\mathcal{T}^{k,l,m}_{\epsilon_1,\epsilon_2}}$. The above equation is written in generic terms, but it is tacitly understood that the cut-off functions used are the ones corresponding to the quadratic interaction at hand.

\subsection{Preliminary estimate on $\partial_s c$}

Observe from subsection~\ref{duhamel} that $e^{is\<\xi\>_k} \partial_s \widehat{c}(\xi)$ can be written as a sum of terms of the type
$$
\int m(\xi,\eta) \widehat{\mathcal{C}}(\eta) \widehat{\mathcal{C}}(\xi-\eta)\,ds,
$$
where $m$ satisfies the estimates of that section. Therefore, by proposition~\ref{toucan},
\begin{equation}
\label{flamingo}
\left\| e^{is\<D\>} \partial_s c \right\|_{W^{N''-1,3/2}} \lesssim \frac{1}{t} \left\| c \right\|_{W^{N'',3}}^2
\end{equation}

\subsection{The small time term~( \ref{eagle1} ) }

Using repeatedly the Sobolev embedding theorem, and the dispersive estimate~(\ref{crane}) gives (assuming $t>1$, the case $t<1$ being trivial)
\begin{equation*}
\begin{split}
\left\|  e^{it\<D\>_\ell} \mathcal{F}^{-1} (\ref{eagle1}) \right\|_{W^{2,\infty}} = & \left\| \int_0^1 e^{i(t-s)\<D\>_\ell}  T_{\widetilde{\chi}_\mathcal{O}(\xi) m(\xi,\eta)} (\mathcal{C},\mathcal{C})\,ds \right\|_{W^{3,6}} \\
& \lesssim \frac{1}{t} \left\| \int_0^1 T_{\widetilde{\chi}_\mathcal{O}(\xi) m(\xi,\eta)} (\mathcal{C},\mathcal{C})\,ds \right\|_{W^{5,6/5}} \\
& \lesssim \frac{1}{t} \int_0^1 \left\| \mathcal{C} \right\|_{W^{6,12/5}}^2 \,ds \\
& \lesssim \frac{1}{t} \int_0^1 \left\| \mathcal{C} \right\|_{H^7}^2 \,ds \lesssim \frac{1}{t} \left\| \mathcal{C} \right\|_X^2.
\end{split}
\end{equation*}

\subsection{The term away from $\mathcal{T}$~(\ref{eagle2})}

In order to deal with this term, integrate by parts in time using the identity $\frac{1}{i\phi} \partial_s e^{is\phi} = e^{is\phi}$. Thus
\begin{subequations}
\begin{align}
\label{swallow1}
(\ref{eagle2}) = & \int \widetilde{\chi}_\mathcal{O}(\xi) \chi_\mathcal{S}(\xi,\eta)  m(\xi,\eta) 
\frac{1}{i\phi} \widehat{\mathcal{C}} (t,\eta)\widehat{\mathcal{C}} (t,\xi-\eta) \, d\eta\\
\label{swallow2}
& - \int_1^t \int \widetilde{\chi}_\mathcal{O}(\xi) \chi_\mathcal{S}(\xi,\eta)  m(\xi,\eta) 
\frac{1}{i\phi} e^{i s \phi}  \partial_s \widehat{c} (s,\eta) \widehat{c} (s,\xi-\eta) \, d\eta\,ds \\
& \;\;\;\;\;\;\;\;\;\;\;\;\;\;\;\;\;\;\;\;\;\;\mbox{+ \{symmetric and easier terms\}}, 
\end{align}
\end{subequations}
where the ``symmetric and easier terms'' correspond to the case where the partial derivative $\partial_s$ hits the other $c$, and to the boundary term at $s=1$. Using successively the Sobolev embedding theorem~\ref{sobolev} and Proposition~\ref{toucan} gives
\begin{equation*}
\begin{split}
\left\| e^{it\<D\>_\ell} \mathcal{F}^{-1} (\ref{swallow1}) \right\|_{W^{2,\infty}} & = \left\|T_{\widetilde{\chi}_\mathcal{O}(\xi) \chi_\mathcal{S}(\xi,\eta) m(\xi,\eta) \frac{1}{i\phi}}(\mathcal{C},\mathcal{C}) \right\|_{W^{2,\infty}} \\
& \lesssim \left\| T_{\widetilde{\chi}_\mathcal{O}(\xi) \chi_\mathcal{S}(\xi,\eta) m(\xi,\eta) \frac{1}{i\phi}} (\mathcal{C},\mathcal{C}) \right\|_{W^{4,\left( \frac{2}{3} - 2 \delta_1 \right)^{-1}}} \\
& \lesssim \|\mathcal{C}\|_{W^{n+4,\left( \frac{1}{3} - \delta_1 \right)^{-1}}} \|\mathcal{C}\|_{W^{n+4,\left( \frac{1}{3} - \delta_1 \right)^{-1}}} \\
& \lesssim \frac{1}{t^{1+6\delta_1}} \|\mathcal{C}\|_X^2.
\end{split}
\end{equation*}
In order to estimate~(\ref{swallow2}), split it as follows
\begin{equation*}
\begin{split}
\mathcal{F}^{-1} (\ref{swallow2}) & = \int_1^t \int \widetilde{\chi}_\mathcal{O}(\xi) \chi_\mathcal{S}(\xi,\eta)  m(\xi,\eta) 
\frac{1}{i\phi} e^{i s \phi}  \partial_s \widehat{c} (s,\eta) \widehat{c} (s,\xi-\eta) \, d\eta\,ds \\
& = \int_1^{t-1} + \int_{t-1}^t \dots \overset{def}{=} I + II.
\end{split}
\end{equation*}
Use the Sobolev embedding theorem, the dispersive estimate~(\ref{crane}), Proposition~\ref{toucan} and the preliminary estimate~(\ref{flamingo}) to get, for $\delta>0$ small enough,
\begin{equation*}
\begin{split}
& \left\| e^{it\<D\>_\ell} \mathcal{F}^{-1} I \right\|_{W^{2,\infty}} \lesssim \int_1^{t-1} \frac{1}{(t-s)^{(3/2-3\delta)}} \left\| T_{\widetilde{\chi}_\mathcal{O}(\xi) \chi_\mathcal{S}(\xi,\eta) m(\xi,\eta) \frac{1}{i\phi}} \left(e^{\pm is\<D\>_\ell} (\partial_s c),\mathcal{C}\right) \right\|_{W^{5,(1-\delta)^{-1}}}\,ds \\
& \;\;\;\;\;\;\;\;\;\;\;\;\; \lesssim \int_1^{t-1} \frac{1}{(t-s)^{3/2-3\delta}} \left\| e^{\pm is\<D\>_\ell} (\partial_s c) \right\|_{W^{5+n,3/2}} \left\| \mathcal{C} \right\|_{W^{5+n,(1/3-\delta)^{-1}}}\,ds \\
& \;\;\;\;\;\;\;\;\;\;\;\;\; \lesssim \int_1^{t-1} \frac{1}{(t-s)^{3/2-3\delta}} \frac{1}{s} \frac{1}{s^{1/2+3\delta}} \|\mathcal{C}\|_X^2 \,ds \\
& \;\;\;\;\;\;\;\;\;\;\;\;\; \lesssim \|\mathcal{C}\|_X^2 \frac{1}{t^{3/2-3\delta}}
\end{split}
\end{equation*}
As for $II$, use repeatedly the Sobolev embedding theorem~\ref{sobolev}, Proposition~\ref{toucan} and the preliminary estimate~(\ref{flamingo}) to get
\begin{equation*}
\begin{split}
& \left\| e^{it\<D\>_\ell} \mathcal{F}^{-1} II \right\|_{W^{2,\infty}} \lesssim \left\| \mathcal{F}^{-1} II \right\|_{W^{4,2}} \\
& \;\;\;\;\;\;\;\;\;\;\;\;\; \lesssim \int_{t-1}^t \left\| T_{\widetilde{\chi}_\mathcal{O}(\xi) \chi_\mathcal{S}(\xi,\eta) m(\xi,\eta) \frac{1}{i\phi}} \left(e^{\pm is\<D\>_\ell} (\partial_s c),\mathcal{C}\right) \right\|_{W^{4,2}}\,ds \\
& \;\;\;\;\;\;\;\;\;\;\;\;\; \lesssim \int_{t-1}^t \left\| T_{\widetilde{\chi}_\mathcal{O}(\xi) \chi_\mathcal{S}(\xi,\eta) (\xi,\eta) m(\xi,\eta) \frac{1}{i\phi}} \left(e^{\pm is\<D\>_\ell} (\partial_s c),\mathcal{C}\right) \right\|_{W^{6,(1-\delta)^{-1}}}\,ds \\
& \;\;\;\;\;\;\;\;\;\;\;\;\; \lesssim \int_{t-1}^t  \left\| e^{\pm is\<D\>_\ell} (\partial_s c) \right\|_{W^{6+n,3/2}} \left\| \mathcal{C} \right\|_{W^{6+n,(1/3-\delta)^{-1}}}\,ds \\
& \;\;\;\;\;\;\;\;\;\;\;\;\; \lesssim \int_{t-1}^t \frac{1}{s} \frac{1}{s^{1/2+3\delta}} \|\mathcal{C}\|_X^2 \,ds \\
& \;\;\;\;\;\;\;\;\;\;\;\;\; \lesssim \|\mathcal{C}\|_X^2 \frac{1}{t^{3/2+3\delta}}
\end{split}
\end{equation*}

\subsection{The term away from $\mathcal{S}$~(\ref{eagle3})}

First transform this term by an integration by parts using the identity
$\frac{\partial_\eta \phi}{is |\partial_\eta \phi|^2}\cdot\partial_\eta e^{is\phi} = e^{is\phi}$. This gives
\begin{subequations}
\begin{align}
\label{seagull1}
(\ref{eagle3}) = & - \int_1^t  \int \widetilde{\chi}_\mathcal{O}(\xi) \chi_{\mathcal{T}}(\xi,\eta) \frac{\partial_\eta \phi}{is |\partial_\eta \phi|^2} m(\xi,\eta) e^{i s \phi} \partial_\eta \widehat{c} (\eta) \widehat{c} (\xi-\eta) \, d\eta\,ds \\ 
\label{seagull2}
& - \int_1^t \int \widetilde{\chi}_\mathcal{O}(\xi) \chi_{\mathcal{T}}(\xi,\eta) \frac{\partial_\eta \phi}{is |\partial_\eta \phi|^2} \partial_\eta m(\xi,\eta) e^{i s \phi} \widehat{c} (\eta) \widehat{c} (\xi-\eta) \, d\eta\,ds \\
& \quad \quad \quad \mbox{ + \{symmetric and easier terms\}}.
\end{align}
\end{subequations}
Let us begin with $(\ref{seagull1})$, which we split as follows:
\begin{equation}
\begin{split}
- (\ref{seagull1}) & = \int_1^t  \int \widetilde{\chi}_\mathcal{O}(\xi) \chi_{\mathcal{T}}(\xi,\eta) \frac{\partial_\eta \phi}{is |\partial_\eta \phi|^2} m(\xi,\eta) e^{i s \phi} \partial_\eta \widehat{c} (\eta) \widehat{c} (\xi-\eta) \, d\eta\,ds \\
& = \int_1^{t/2} + \int_{t/2}^t \dots \overset{def}{=} I + II.
\end{split}
\end{equation}
For $\delta<0$, $|\delta|$ small, apply successively the Sobolev embedding theorem, the dispersive estimate~(\ref{crane}), and Proposition~\ref{toucan} to get
\begin{equation}
\begin{split}
\left\| \mathcal{F}^{-1} I \right\|_{W^{2,\infty}} & \lesssim \int_1^{t/2} \left\| e^{i(t-s)\<D\>} \frac{1}{s} T_{\widetilde{\chi}_\mathcal{O}(\xi) \chi_{\mathcal{T}}(\xi,\eta) \frac{\partial_\eta \phi}{i |\partial_\eta \phi|^2} m(\xi,\eta)} \left( e^{\pm is \<D\>_\ell} (xc),\mathcal{C}\right) \right\|_{W^{3,\left( \frac{1}{6} + \delta \right)^{-1}}} \,ds \\
& \lesssim \int_1^{t/2}  \frac{1}{(t-s)^{1-3\delta}} \frac{1}{s} \left\| T_{ \widetilde{\chi}_\mathcal{O}(\xi) \chi_{\mathcal{T}}(\xi,\eta)  \frac{\partial_\eta \phi}{i |\partial_\eta \phi|^2} m(\xi,\eta)}
\left( e^{\pm is \<D\>_\ell} (xc),\mathcal{C}\right) \right\|_{W^{5,\left( \frac{5}{6} - \delta \right)^{-1}}}\,ds\\
& \lesssim \int_1^{t/2} \frac{1}{(t-s)^{1-3\delta}} \frac{1}{s} \left\|xc\right\|_{H^{n+5}} \left\|\mathcal{C}\right\|_{W^{n+5,\left(\frac{1}{3}-\delta \right)^{-1}}}\,ds\\
& \lesssim \int_1^{t/2} \frac{1}{(t-s)^{1-3\delta}} \frac{1}{s} \|\mathcal{C}\|_X^2 \sqrt{s} \frac{1}{s^{\frac{1}{2}+3\delta}} \,ds \\
& \lesssim \|\mathcal{C}\|_X^2 \frac{1}{t}.
\end{split}
\end{equation}
Taking this time $\delta > 0$ and small, and retracing the above steps, one gets
$$
\left\| \mathcal{F}^{-1} II \right\|_{W^{2,\infty}} \lesssim \|\mathcal{C}\|_X^2 \frac{1}{t}.
$$
The term~(\ref{seagull2}) can be estimated in a very similar way. Indeed, since $m$ satisfies the estimates~(\ref{wigeon}), $\partial_\eta m(\xi,\eta)$ yields at worst singularities of the type $\frac{1}{|\eta|}$, $\frac{1}{|\xi-\eta|}$. The above scheme can then be employed since by Hardy's inequality, and Plancherel's equality, $\left\| \frac{1}{|\xi|} \widehat{c}(\xi) \right\|_2 \lesssim \left\| \partial_\xi \widehat{c}(\xi) \right\|_2 = \left\| xc \right\|_2$.

\section{Localization estimates}

\label{sectionlocalization}

We want to prove here that
$$
\sup_t \left[ \frac{1}{\sqrt{t}} \left\| |x| (a,b) \right\|_{H^{N'}} + \left\| |x|^{1/8} \widetilde{Z}_{\mathcal{O}} (a,b) \right\|_2 \right] \lesssim \left\|(e^{it\<D\>_{c_s}} \mathcal{A}_0 , e^{it\<D\>} \mathcal{B}_0 ) \right\|_X + \left\|(\mathcal{A},\mathcal{B} )\right\|_X^2.
$$
As above, this reduces to proving that the generic term 
$$
g(t) = \mathcal{F}^{-1} \int_0^t e^{is\phi(\xi,\eta)} m(\eta,\xi) \widehat{c}(\eta,s) \widehat{c}(\xi-\eta,s) \, ds
$$
defined in~(\ref{pinguin}) satisfies the localization estimates
\begin{equation}
\label{tit}
\sup_t \left[ \frac{1}{\sqrt{t}} \left\| |x| g \right\|_{H^{N'}} + \left\| |x|^{1/8} \widetilde{Z}_{\mathcal{O}} g \right\|_2 \right] \lesssim \left\|(\mathcal{A},\mathcal{B} )\right\|_X^2.
\end{equation}
By symmetry, it suffices to control
$$
g'(t) = \mathcal{F}^{-1} \int_0^t e^{is\phi(\xi,\eta)} m(\eta,\xi) \zeta^1(\xi,\eta) \widehat{c}(\eta,s) \widehat{c}(\xi-\eta,s)\,ds
$$
(where the cut-off symbol $\zeta^1$, defined in Section~\ref{sectionmachinery}, ensures that $|\xi-\eta|\lesssim |\eta|$ for $(\xi,\eta)$ large). The bound for the second norm in~(\ref{tit}) was derived in~\cite{PG}, and the same scheme of proof applies here (once again, the novelty compared to~\cite{PG} is that the Euler-Maxwell system is quasilinear, but the resulting loss of regularity in the nonlinear term is easily absorbed by the $H^N$ norm). Therefore, we focus on the first norm in~(\ref{tit}), for which some new difficulties arise. It will be helpful to split $m$ as $m = m_0 m_1 m_2$ (see Subsection~\ref{duhamel}).
Multiplying $g$ by the weight $x$ corresponds in Fourier space to differentiating $\widehat{g}$ with respect to $\xi$. This gives
\begin{subequations}
\begin{align}
\label{redtailhawk1} \partial_\xi \widehat{g'}(\xi) = & \int_0^t \int e^{is\phi} m(\xi,\eta) \zeta^1(\xi,\eta) \widehat{c}(\eta) \partial_\xi \widehat{c}(\xi-\eta) \,d\eta\,ds \\
\label{redtailhawk3} & + \int_0^t \int is \partial_\xi \phi e^{is\phi} m(\xi,\eta) \zeta^1(\xi,\eta) \widehat{c}(\eta) \widehat{c}(\xi-\eta) \,d\eta\,ds \\
\label{redtailhawk2b} & + \int_0^t \int e^{is\phi} m_0(\xi) m_1(\eta) \partial_\xi m_2(\xi-\eta) \zeta^1(\xi,\eta) \widehat{c}(\eta) \widehat{c}(\xi-\eta) \,d\eta\,ds \\
\label{redtailhawk2c} & + \int_0^t \int e^{is\phi} m_0(\xi) m_1(\eta) m_2(\xi-\eta) \partial_\xi \zeta^1(\xi,\eta) \widehat{c}(\eta) \widehat{c}(\xi-\eta) \,d\eta\,ds \\
\label{redtailhawk2} & + \partial_\xi m_0(\xi) \int_0^t \int e^{is\phi} m_1(\eta) m_2(\xi-\eta) \zeta^1(\xi,\eta) \widehat{c}(\eta) \widehat{c}(\xi-\eta) \,d\eta\,ds.
\end{align}
\end{subequations}

\subsection{Estimate of (\ref{redtailhawk1})}

To estimate~(\ref{redtailhawk1}), use the Strichartz estimate~(\ref{strichartz}) and Proposition~\ref{toucan} to get
\begin{equation*}
\begin{split}
\left\| \mathcal{F}^{-1} (\ref{redtailhawk1}) \right\|_{H^{N'}} & \lesssim  \left\|\int_1^{t} e^{is\<D\>} T_{m(\xi,\eta) \zeta^1(\xi,\eta)} (\mathcal{C},e^{is\<D\>} xc)\,ds \right\|_{H^{N'}}  \\
& \lesssim \left\|T_{m(\xi,\eta) \zeta^1(\xi,\eta)} (\mathcal{C},e^{is\<D\>} xc)\right\|_{L^{\left(\frac{1}{2}+\frac{3}{2}\delta_1\right)^{-1}}_t W^{N'+1,\left(\frac{5}{6}-\delta_1\right)^{-1}}_x} \\
& \lesssim \left\| \left\| \mathcal{C} \right\|_{W^{n+N'+1,\left(\frac{1}{3} - \delta_1 \right)^{-1}}} \left\|x c\right\|_2 \right\|_{L^{\left(\frac{1}{2}+\frac{3}{2}\delta_1\right)^{-1}}_t} \\
& \lesssim \left\| \mathcal{C} \right\|_X^2 \left\| \sqrt{\<s\>} \< s \>^{-\frac{1}{2}-3\delta_1} \right\|_{L^{\left(\frac{1}{2}+\frac{3}{2}\delta_1\right)^{-1}}_t} \\
& \lesssim \left\| \mathcal{C} \right\|_X^2 \sqrt{\< t \>}.
\end{split}
\end{equation*}

\subsection{Estimate of (\ref{redtailhawk3})}

To estimate~(\ref{redtailhawk3}), distinguish three types of interactions, by writing $c = Z_\mathcal{O} c + \widetilde{Z}_\mathcal{O} c$. The term~(\ref{redtailhawk2}) can be written as
\begin{subequations}
\begin{align}
\label{hummingbird1}
\mathcal{F}^{-1} (\ref{redtailhawk2}) = & \int_0^t e^{is\<D\>} s T_{m(\xi,\eta) \zeta^1(\xi,\eta) \partial_\xi \phi} (Z_\mathcal{O} \mathcal{C},Z_\mathcal{O} \mathcal{C})\,ds \\
\label{hummingbird2}
& + \int_0^t e^{is\<D\>} s T_{m(\xi,\eta) \zeta^1(\xi,\eta) \partial_\xi \phi} (\widetilde{Z}_\mathcal{O} \mathcal{C},\widetilde{Z}_\mathcal{O} \mathcal{C})\,ds \\
\label{hummingbird3}
& +  \int_0^t e^{is\<D\>} s T_{m(\xi,\eta) \zeta^1(\xi,\eta)\partial_\xi \phi} (\widetilde{Z}_\mathcal{O} \mathcal{C},Z_\mathcal{O} \mathcal{C})\,ds\\
\label{hummingbird4}
& + \int_0^t e^{is\<D\>} s T_{m(\xi,\eta) \zeta^1(\xi,\eta)\partial_\xi \phi} (Z_\mathcal{O} \mathcal{C},\widetilde{Z}_\mathcal{O} \mathcal{C})\,ds.
\end{align}
\end{subequations}
The term~(\ref{hummingbird1}) can be treated exactly as in~\cite{PG}, thus we skip it. Next we shall bound the term~(\ref{hummingbird3}). The term~(\ref{hummingbird2}) is comparatively easier, since the two interacting waves correspond to non-outcome frequencies, thus enjoying better bounds. As for the term~(\ref{hummingbird4}) it is also easier: indeed for this term, the symbol $\zeta^1(\xi,\eta)$ imposes that $\widetilde{Z}_\mathcal{O} \mathcal{C}$ is lower frequency than $Z_\mathcal{O} \mathcal{C}$; but this is possible only if both are low frequency.

Coming back to~(\ref{hummingbird3}), use Proposition~\ref{toucan} to get
\begin{equation*}
\begin{split}
\left\| \ref{hummingbird3} \right\|_{H^{N'}} & \lesssim \int_0^t s \left\| e^{is\<D\>} T_{m(\xi,\eta) \zeta^1(\xi,\eta) \partial_\xi \phi} (\widetilde{Z}_\mathcal{O} \mathcal{C}, Z_\mathcal{O} \mathcal{C})\right\|_{H^{N'}}\,ds \\
& \lesssim \int_0^t s \left\| Z_\mathcal{O} \mathcal{C} \right\|_{L^{\left( \frac{1}{3}-\delta_1 \right)^{-1}}} \left\| \widetilde{Z}_\mathcal{O} \mathcal{C} \right\|_{W^{N'+n,\left(\frac{1}{6}+\delta_1\right)^{-1}}} \,ds \\
& \lesssim \|\mathcal{C}\|_X^2 \int_0^t s \frac{1}{s^{\frac{1}{2}+3\delta_1}} \frac{1}{s^{1-3\delta_1}}\,ds \lesssim \|\mathcal{C}\|_X^2 \sqrt{t}.
\end{split}
\end{equation*}

\subsection{Estimate of (\ref{redtailhawk2b})} 

By~(\ref{wigeon}), $\partial_\xi m_2(\xi-\eta)$ can be bounded by $\frac{C}{|\xi-\eta|}$. Bounding by Hardy's inequality $\frac{1}{|\xi-\eta|} \widehat{c}(\xi-\eta)$ in $L^2$ by $\partial_\xi \widehat{c}(\xi-\eta)$ in $L^2$, the estimate for (\ref{redtailhawk1}) can be easily adapted.

\subsection{Estimate of (\ref{redtailhawk2c})}

Since $\partial_\xi \zeta^1(\xi,\eta)$ does not have a singularity, this term is easy and we skip it.

\subsection{Estimate of (\ref{redtailhawk2})}

By~(\ref{wigeon}), $\partial_\xi m_0(\xi)$ can be bounded by $1$ for high frequencies, and $\frac{1}{|\xi|}$ for small frequencies. Forgetting about high frequencies, which are easily dealt with, we need to bound
$$
\mathcal{F}^{-1} \frac{1}{|\xi|}  \int_0^t \int e^{is\phi} \partial_\xi m_1(\eta) m_2(\xi-\eta) \zeta^1(\xi,\eta) \widehat{c}(\eta) \widehat{c}(\xi-\eta) \,d\eta\,ds
$$
in $H^{N'}$. By Hardy's inequality, it suffices to bound
$$
\mathcal{F}^{-1} \partial_\xi \int_0^t \int e^{is\phi} \partial_\xi m_1(\eta) m_2(\xi-\eta) \zeta^1(\xi,\eta) \widehat{c}(\eta) \widehat{c}(\xi-\eta) \,d\eta\,ds
$$
in $H^{N'}$. But expanding the $\xi$ derivative yields terms similar to~(\ref{redtailhawk1}) (\ref{redtailhawk3}) (\ref{redtailhawk2b}) (\ref{redtailhawk2c}), which we have just seen how to estimate.

\section{Energy estimates for the Maxwell  part}

\label{plasma}

We shall prove in this section that
\begin{equation}
\label{petrel1}
\left\| \mathcal{B} \right\|_{H^N} \lesssim \left\|(\mathcal{A}_0,\mathcal{B}_0)\right\|_{H^N} + \left\|(\mathcal{A},\mathcal{B})\right\|_X^2 + \int_0^t \frac{1}{\<s\>} \left\| (\mathcal{A},\mathcal{B}) \right\|_{H^N} \,ds.
\end{equation}
Together with~(\ref{petrel2}), this will imply that
$$
\left\| ( \mathcal{A},\mathcal{B} ) \right\|_{H^N} \lesssim \epsilon 
t^{C_0 \epsilon}.
$$
The following observation will be crucial: it follows from their definition that $\mathcal{A}$ and $\mathcal{B}$ control the physical unknowns $u$ and $n$ as follows:
\begin{equation}
\begin{split}
& \left\|Qu \right\|_{H^N} \lesssim \left\|\mathcal{A} \right\|_{H^N} \\
& \left\|Pu\right\|_{H^{N+1}} \lesssim \left\| \mathcal{B} \right\|_{H^N} \\
& \left\|n\right\|_{H^{N}} \lesssim \left\| \mathcal{A} \right\|_{H^N}.
\end{split}
\end{equation}

\subsection{Preliminary estimate: $\partial_s a$}

It follows from $(EM')$ that 
$$
\partial_t a = e^{-it\<D\>_{c_s}} \left[ \frac{\langle D \rangle \nabla}{|D|} \cdot (n u) + i |D| \left( \left| u \right|^2 + |n|^2 \right)\right]
$$
Therefore by the product estimates~(\ref{product}),
\begin{equation}
\label{snake}
\begin{split}
\left\|\partial_t a \right\|_{H^{N-1}} & \lesssim \left\|nu\right\|_{H^{N}} + \left\| u^2 \right\|_{H^N} + \left\| n^2 \right\|_{H^N} \\
& \lesssim \left( \left\| n \right\|_{H^N} + \left\| u \right\|_{H^N} \right) \left( \left\| n \right\|_{\infty} + \left\| u \right\|_{\infty} \right) \\
& \lesssim \frac{1}{\< t \>^{\frac{1}{2}+3\delta_1}}
\end{split}
\end{equation}

\subsection{Distinction between outcome and non-outcome frequencies}

Consider the integral equation satisfied by $b$:
$$
b(t) = \mathcal{B}_0 + \int_0^t e^{- is\langle D \rangle} \frac{\nabla}{\<D\>} \times ( n u)\,ds.
$$
Split $n$ and $u$ into $Z_{\mathcal{O}} n + \widetilde{Z}_{\mathcal{O}} n$, respectively $Z_{\mathcal{O}} u + \widetilde{Z}_{\mathcal{O}} u$. This gives
\begin{subequations}
\begin{align}
\label{rabbit1} b(t) = \mathcal{B}_0 & + \int_0^t e^{- is\langle D \rangle} \frac{\nabla}{\<D\>} \times (\widetilde{Z}_{\mathcal{O}} n \,\widetilde{Z}_{\mathcal{O}} u)\,ds \\
\label{rabbit2} & + \int_0^t e^{- is\langle D \rangle} \frac{\nabla}{\<D\>} \times (Z_{\mathcal{O}} n \,Z_{\mathcal{O}} u)\,ds \\
\label{rabbit3} & + \int_0^t e^{- is\langle D \rangle} \frac{\nabla}{\<D\>} \times (Z_{\mathcal{O}} n \, \widetilde{Z}_{\mathcal{O}} u + \widetilde{Z}_{\mathcal{O}} n Z_{\mathcal{O}} u)
\end{align}
\end{subequations}
The term~(\ref{rabbit1}) is easily estimated: using standard product laws, and the Sobolev embedding theorem:
\begin{equation*}
\begin{split}
\left\| (\ref{rabbit1}) \right\|_{H^N} & = \left\| \int_0^t e^{- is\langle D \rangle} \frac{\nabla}{\<D\>} \times (\widetilde{Z}_{\mathcal{O}} n \,\widetilde{Z}_{\mathcal{O}} u)\,ds \right\|_{H^N} \\
& \lesssim \int_0^t \left[ \left\| \widetilde{Z}_{\mathcal{O}} n \right\|_{H^N} \left\| \widetilde{Z}_{\mathcal{O}} u \right\|_{L^\infty} + \left\| \widetilde{Z}_{\mathcal{O}} n \right\|_{L^\infty} \left\| \widetilde{Z}_{\mathcal{O}} u \right\|_{H^N} \right] \,ds \\
& \lesssim \left\| (\mathcal{A},\mathcal{B}) \right\|_X \int_0^t \frac{1}{\<s\>} \left\|(u,n)\right\|_{H^N} \,ds\\
& \lesssim \left\| (\mathcal{A},\mathcal{B}) \right\|_X \int_0^t \frac{1}{\<s\>} \left\|(\mathcal{A},\mathcal{B})\right\|_{H^N} \,ds\\
\end{split}
\end{equation*}
For the term (\ref{rabbit2}), we take advantage of the frequency localization of $Z_{\mathcal{O}} n$ and $Z_{\mathcal{O}} u$ to write, with the help of Bernstein's inequality,
\begin{equation*}
\begin{split}
\left\| (\ref{rabbit2}) \right\|_{H^N} & = \left\| \int_0^t e^{- is\langle D \rangle} \frac{\nabla}{\<D\>} \times (Z_{\mathcal{O}} n \,Z_{\mathcal{O}} u)\,ds \right\|_{H^N} \\
& \lesssim \int_0^t \left\| Z_{\mathcal{O}} n \,Z_{\mathcal{O}} u \right\|_{\left(\frac{2}{3}-2\delta\right)^{-1}}\,ds \\
& \lesssim \int_0^t \left\| Z_{\mathcal{O}} n \right\|_{\left(\frac{1}{3}-\delta\right)^{-1}} \left\| Z_{\mathcal{O}} u \right\|_{\left(\frac{1}{3}-\delta\right)^{-1}} \,ds \\
& \lesssim \|(\mathcal{A},\mathcal{B})\|_X^2 \int_0^t \frac{ds}{\<s\>^{1+6\delta_1}}\,ds \lesssim \|(\mathcal{A},\mathcal{B})\|_X^2.
\end{split}
\end{equation*}

\subsection{Interactions between outcome and non-outcome frequencies}

Thus we now take a closer look at~(\ref{rabbit3}), wich reads
\begin{subequations}
\begin{align}
\label{tiger1} (\ref{rabbit3}) = & \int_0^t e^{- is\langle D \rangle} \frac{\nabla}{\<D\>} \times ( Z_{\mathcal{O}} n \, \widetilde{Z}_{\mathcal{O}} P u)\,ds \\
\label{tiger2} & + \int_0^t e^{- is\langle D \rangle} \frac{\nabla}{\<D\>} \times (Z_{\mathcal{O}} n \, \widetilde{Z}_{\mathcal{O}} Q u)\,ds \\
\label{tiger3} & + \int_0^t e^{- is\langle D \rangle} \frac{\nabla}{\<D\>} \times (\widetilde{Z}_{\mathcal{O}} n \, Z_{\mathcal{O}} u )\,ds 
\end{align}
\end{subequations}
The first term, (\ref{tiger1}), can be estimated with the help of the Strichartz estimate~(\ref{strichartz}) and the standard product law~\ref{product}:
\begin{equation*}
\begin{split}
\left\| (\ref{tiger1}) \right\|_{H^N} & = \left\| \int_0^t e^{- is\langle D \rangle} \frac{\nabla}{\<D\>} \times ( Z_{\mathcal{O}} n \, \widetilde{Z}_{\mathcal{O}} P u)\,ds \right\|_{H^N} \\
&\lesssim \left\| Z_{\mathcal{O}} n \, \widetilde{Z}_{\mathcal{O}} P u \right\|_{L_t^{\left( \frac{1}{2}+\frac{3}{2}\delta_1 \right)^{-1}} W_x^{\left( 1 - \frac{5}{2}\delta_1 + N \right) , \left( \frac{5}{6} - \delta_1 \right)^{-1}}} \\
&\lesssim \left\| \left\| Z_{\mathcal{O}} n \right\|_{L_x^{\left( \frac{1}{3}-\delta_1 \right)^{-1}}} \left\| \widetilde{Z}_{\mathcal{O}} P u \right\|_{H_x^{N+1}} + \left\| Z_{\mathcal{O}} n \right\|_{H^{N+1}_x} \left\| \widetilde{Z}_{\mathcal{O}} P u \right\|_{L_x^{\left( \frac{1}{3}-\delta_1 \right)^{-1}}} \right\|_{L_t^{\left( \frac{1}{2}+\frac{3}{2}\delta_1 \right)^{-1}}} \\
&\lesssim \left\|(\mathcal{A},\mathcal{B})\right\|_X^2 \left\| \<t\>^{-\frac{1}{2}-3\delta_1} \right\|_{L_t^{\left( \frac{1}{2}+\frac{3}{2}\delta_1 \right)^{-1}}} \\
& \lesssim \left\|(\mathcal{A},\mathcal{B})\right\|_X^2.
\end{split}
\end{equation*}
The estimates for the terms (\ref{tiger2}) and  (\ref{tiger3}) are exactly the same, 
changing the roles of $n$ and $Qu$. We will only treat (\ref{tiger2}). 
The term  (\ref{tiger2})  can be decomposed as
\begin{subequations}
\begin{align}
\label{dragon1} (\ref{tiger2}) & = \int_0^t e^{- is\langle D \rangle} \frac{\nabla}{\<D\>} \times (Z_{\mathcal{O}} n \, \widetilde{Z}_{\mathcal{O}} Z_l Q u)\,ds \\
\label{dragon2} & + \int_0^t e^{- is\langle D \rangle} \frac{\nabla}{\<D\>} \times (Z_{\mathcal{O}} n \, Z_h Q u)\,ds
\end{align}
\end{subequations}
(recall that $Z_l$ and $Z_h$ have been defined in Section~\ref{sectionmachinery}, and that $M_0$ has been chosen so big that $Z_h \widetilde{Z}_{\mathcal{O}} = Z_h$).
The term (\ref{dragon1}) can be estimated exactly as (\ref{rabbit2}); thus we skip it. We are left with (\ref{dragon2}). Recall now that $n$ and $Qu$ can be written as
$$
n(t) = 2 \frac{|D|}{\<D\>_{c_s}} \frak{Re} e^{it\<D\>_{c_s}} a(t) \;\;\;\;\mbox{and}\;\;\;\; Qu(t) = - 2 \frac{\nabla}{|D|} \frak{Im} e^{it\<D\>_{c_s}} a(t).
$$
This implies that the Fourier transform of (\ref{dragon2}) can be written as a sum of terms of the type 
\begin{equation}
\label{ox}
\mathcal{F} (\ref{dragon2}) = \int_0^t \int e^{is \phi(\xi,\eta)} \chi_\mathcal{O}(\eta) \left(1-\chi \left( \frac{\xi-\eta}{M} \right) \right) \widetilde{m}(\xi,\eta) \widehat{a}(\eta,s) \widehat{a}(\xi-\eta,s)\,d\eta\,ds
\end{equation}
where, for simplicity, we denote undistinctly $\widehat{a}$ for $\widehat{a}$ and $\widehat{\bar a}$, $\phi$ has the form
$$
\phi(\xi,\eta) = \< \xi \> + \epsilon_1 \< \eta \>_{c_s} + \epsilon_2 \< \xi-\eta \>_{c_s} \;\;\;\;\;\mbox{with $\epsilon_1,\epsilon_2 \in \{ \pm 1 \}$},
$$
and $\widetilde{m}(\xi,\eta)$ is a (matrix-valued) symbol satisfying the estimates
$$
\left| \partial_\xi^\alpha \partial_\eta^\beta \widetilde{m}(\eta,\xi) \right| \lesssim \frac{1}{|(\xi,\eta)|^{|\alpha|+|\beta|}}.
$$
A crucial point will be that, on the support of $\chi_\mathcal{O}(\eta) \left(1-\theta \left( \frac{\xi-\eta}{M_0} \right) \right)$, since $M_0$ is chosen big enough, $|\xi|>>|\eta|\sim1$, and $\phi$ satisfies the inequalities
$$
\left| \partial_\xi^\alpha \partial_\eta^\beta \frac{1}{\phi(\eta,\xi)} \right| \lesssim \frac{1}{|\xi|^{|\alpha|+|\beta|+1}}.
$$
Integrating by parts in~(\ref{ox}) using the identity $\frac{1}{i\phi} \partial_s e^{is\phi} = e^{is\phi}$, and denoting 
$$
\mu(\xi,\eta) \overset{def}{=} \frac{\chi_\mathcal{O}(\eta) \left(1-\theta \left( \frac{\xi-\eta}{M_0} \right) \right)\widetilde{m}(\eta,\xi)}{\phi(\xi,\eta)}
$$
gives
\begin{subequations}
\begin{align}
\label{horse1} \mathcal{F} (\ref{dragon2}) = & - \int_0^t \int e^{is \phi(\xi,\eta)} \mu(\xi,\eta) \partial_s \widehat{a}(\eta,s) \widehat{a}(\xi-\eta,s)\,d\eta\,ds \\
\label{horse2} & - \int_0^t \int e^{is \phi(\xi,\eta)} \mu(\xi,\eta) \widehat{a}(\eta,s) \partial_s \widehat{a}(\xi-\eta,s)\,d\eta\,ds \\
\label{horse3} & + \int e^{it \phi(\xi,\eta)} \mu(\xi,\eta) \widehat{a}(\eta,t) \widehat{a}(\xi-\eta,t)\,d\eta\,ds \\
\label{horse4} & - \int \mu(\xi,\eta) \widehat{a}(\eta,0) \widehat{a}(\xi-\eta,0)\,d\eta\,ds.
\end{align}
\end{subequations}
The only difficult term is~(\ref{horse2}); thus we skip the other ones and estimate it with the help of Proposition~\ref{toucan}
\begin{equation*}
\begin{split}
\left\| \mathcal{F}^{-1} (\ref{horse2}) \right\|_{H^N} & = \left\| \int_0^t e^{is\<D\>} T_\mu (\mathcal{A}, e^{it\<D\>} \partial_s a(s))\,ds \right\|_{H^N} \\
& \lesssim \int_0^t \left\| T_\mu \left( \mathcal{A} \,,\,e^{it\<D\>} \partial_s a(s) \right) \right\|_{H^N} \,ds \\
& \lesssim \int_0^t \left\| \mathcal{A} \right\|_\infty \left\| \partial_s a(s) \right\|_{H^{N-1}}\,ds \\
& \lesssim \left\|(\mathcal{A},\mathcal{B})\right\|_X^2 \int_0^t \frac{1}{\<s\>} \frac{1}{\<s\>^{1/2+3\delta_1}}\,ds \lesssim \left\|(\mathcal{A},\mathcal{B})\right\|_X^2.
\end{split}
\end{equation*}

\section{Energy estimates for the acoustic part}
\label{acoustic}
We shall prove in this section that
\begin{equation}
\label{petrel2}
\left\| \mathcal{A} \right\|_{H^N} \lesssim \left\|(\mathcal{A}_0,\mathcal{B}_0)\right\|_{H^N} + \left\|(\mathcal{A},\mathcal{B})\right\|_X^2 + \int_0^t \frac{1}{\<s\>} \left\| (\mathcal{A},\mathcal{B}) \right\|_{H^N} \,ds.
\end{equation}
Together with~(\ref{petrel1}), this will imply that
$$
\left\| ( \mathcal{A},\mathcal{B}) \right\|_{H^N} \lesssim \epsilon  t^{C_0 \epsilon}.
$$

\subsection{The equation  \eqref{Aequat}}
 First we rewrite the 
evolution equation \eqref{Aequat} satisfied by $\A$. 
We will use the notation $\d =  \frac{\langle D \rangle_{c_s} \nabla}{|D|}  $. 
We start by expanding the first nonlinear terms appearing in \eqref{Aequat} 

We start by taking $N$ derivatives of  \eqref{Aequat}. We get 

\begin{equation}\label{NAequat}
\begin{aligned}
   \partial_t  \partial^N \mathcal{A} &=   i \langle D \rangle_{c_s} \partial^N  \mathcal{A} - 
\frac{1}{2}\d \cdot ( \partial^N   n u   + n \partial^N  u )  \\ 
 & \quad       +  \frac{i |D|}2  \left( u  \partial^N  u  + c_s^2 n  \partial^N  n  \right) + R^N_1  \\ 
&=    i \langle D \rangle_{c_s} \partial^N  \mathcal{A} - 
\frac{1}{2} u \cdot \d \partial^N   n 
  - \frac{1}{2} n \d \cdot     \partial^N  u   \\ 
 & \quad       +  i u \cdot   \frac{|D|}2  \partial^N  u  +  i c_s^2 n  \frac{|D|}2    \partial^N  n   + R^N_2 \\ 
& =    i \langle D \rangle_{c_s} \partial^N  \mathcal{A} - u\cdot   \nabla \partial^N \mathcal{A}  
 + i n \langle D \rangle_{c_s} \partial^N  \mathcal{A}  + R^N_3 
\end{aligned}
\end{equation} 
where  the rest terms $R^N_i $ consist  of quadratic  lower order terms.  In particular, we have

\begin{align*}
   R^N_1  &=  \d \cdot \Big(   \partial^N  ( n u) -       \partial^N   n u   + n \partial^N  u \Big)  \\ 
  & \quad    +  \frac{i |D|}4  \left( \partial^N (|u|^2)   - 2    u  \partial^N  u  + 
  c_s^2   \partial^N (|n|^2)     - 2 c_s^2 n  \partial^N  n  \right)   \\ 
2 R^N_2     &  = 2R^N_1  - [ \d, u   ]  \partial^N   n  -   [ \d\cdot  , n   ] \partial^N   u   \\ 
 & \quad   + i [|D|,u ]  \partial^N   u   + i c_s^2[|D|,n ]  \partial^N   n \\ 
   R^N_3  &  =  R^N_2   - i  n  \frac1{2|D|}    \partial^N  n  + i u   \frac{|D|}2 \partial^N  Pu := R^N 
\end{align*}
% \end{equation} 
It is clear that we have the following estimate 
$$  \| R^N   \|_{L^2}  \leq \| \nabla (\A,\B) \|_{L^\infty}   \|  (\A,\B) \|_{H^N} .      $$
Hence $ R^N   $  does not loose derivatives, but for the part of $(\A,\B)  $ which is out-come 
we do not have enough decay to use Gronwall directly.

\subsection{Non resonant phase}

Due to the slow decay of the $Z_{\mathcal{O}} u   $ and $Z_{\mathcal{O}} n    $, we have to use 
non resonant properties of the  second and third terms on the right-hand side of \eqref{Aequat}.

\begin{lem}\label{lemNon}
 There exist a positive number $\kappa_0 > 0$ and 
 a constant $C_0 > 0$ such that for 
$|\xi| \geq C_0 $ and   $|\eta| \leq C_\mathcal{R} $,  we have 
\begin{equation} \label{phase2}
\left| \partial_\xi^\alpha \partial_\eta^\beta \frac{1}{\phi^{\epsilon_1,\epsilon_2}_{c_s,k,\ell} (\xi,\eta)} \right| \lesssim \frac{1}{|\xi|^{|\alpha|}}
\end{equation}
for $\epsilon_1,\epsilon_2 = \pm$ and $k,\ell = 1,c_s$. 
\end{lem}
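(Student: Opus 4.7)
I would split the argument into cases according to $(\epsilon_2,\ell)$: the only way for $\phi$ to be small at large $|\xi|$ is for the $\langle \xi \rangle_{c_s}$ and $\langle \xi-\eta\rangle_\ell$ terms to nearly cancel, which forces $\epsilon_2 = -1$ and $\ell = c_s$. In the easy cases---either $\epsilon_2 = +1$ (any $\ell$), or $\epsilon_2 = -1$ with $\ell = 1$---one obtains the strong lower bound $|\phi| \gtrsim \langle \xi \rangle$ for $|\xi| \geq C_0$ and $|\eta| \leq C_\mathcal{R}$. For $\epsilon_2 = +1$ this is immediate, since $\langle \xi \rangle_{c_s} + \langle \xi-\eta \rangle_\ell$ grows like $(c_s + \ell)|\xi|$ while $\langle \eta \rangle_k$ stays bounded. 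For $\epsilon_2 = -1, \ell = 1$, the conjugate expansion of $\langle \xi-\eta \rangle - \langle \xi\rangle_{c_s}$ shows that this difference grows like $(1-c_s)|\xi|$; the crucial input is $c_s < 1$. In the delicate case $\epsilon_2 = -1, \ell = c_s$, one only gets $|\phi| \gtrsim 1$: the Lipschitz bound $|\langle \xi\rangle_{c_s} - \langle \xi-\eta\rangle_{c_s}| \leq c_s|\eta|$ together with
$$\langle \eta\rangle_k - c_s|\eta| = \frac{1 + (k^2 - c_s^2)|\eta|^2}{\langle \eta\rangle_k + c_s|\eta|} \geq \kappa_0 > 0$$
(which uses $k \geq c_s$ and $|\eta|\leq C_\mathcal{R}$) yields $|\phi| \geq \langle \eta \rangle_k - c_s|\eta| \geq \kappa_0$.

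Next I would expand derivatives of $1/\phi$ via the Fa\`a di Bruno formula: $\partial_\xi^\alpha \partial_\eta^\beta(1/\phi)$ is a finite linear combination of terms $\phi^{-(k+1)}\prod_{i=1}^k \partial_\xi^{\alpha_i}\partial_\eta^{\beta_i}\phi$ with $\sum_i(\alpha_i,\beta_i) = (\alpha,\beta)$ and each $|\alpha_i|+|\beta_i|\geq 1$. The elementary bounds $|\partial^a \langle \xi \rangle_{c_s}| \lesssim \langle \xi\rangle^{1-a}$, $|\partial^b \langle \eta\rangle_k| \lesssim 1$ for $|\eta|\leq C_\mathcal{R}$, and $|\partial_\xi^a\partial_\eta^b \langle \xi-\eta \rangle_\ell|\lesssim \langle \xi\rangle^{1-a-b}$ (valid uniformly for $|\xi|\geq C_0$ and $|\eta|\leq C_\mathcal{R}$) are then plugged in. In the two easy cases $|\phi|^{k+1}\gtrsim |\xi|^{k+1}$ dominates the worst numerator growth $|\xi|^{k}$ by a factor $|\xi|^{|\alpha|+1}$, leaving an exponent at most $-|\alpha|-1$, which is better than needed.

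The genuine obstacle is the case $\epsilon_2 = -1, \ell = c_s$, where $|\phi|\gtrsim 1$ alone is too weak and the full $|\xi|^{-|\alpha|}$ must come from the numerator. The saving comes from the cancellation
$$\partial_\xi^{\alpha_i}\phi = \partial^{\alpha_i}\langle \cdot \rangle_{c_s}(\xi) - \partial^{\alpha_i}\langle \cdot \rangle_{c_s}(\xi-\eta), \qquad |\alpha_i|\geq 1,\ \beta_i = 0,$$
for pure $\xi$-factors: since $\nabla\partial^{\alpha_i}\langle \cdot\rangle_{c_s} = O(\langle \cdot\rangle^{-|\alpha_i|})$ at infinity, the fundamental theorem of calculus along the segment from $\xi-\eta$ to $\xi$ gives $|\partial_\xi^{\alpha_i}\phi|\lesssim |\eta|\,|\xi|^{-|\alpha_i|}\lesssim |\xi|^{-|\alpha_i|}$. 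Mixed factors ($|\alpha_i|,|\beta_i|\geq 1$) lose their $\langle \xi\rangle_{c_s}$ and $\langle \eta\rangle_k$ contributions, leaving $|\partial_\xi^{\alpha_i}\partial_\eta^{\beta_i}\phi|\lesssim |\xi|^{1-|\alpha_i|-|\beta_i|}$; pure $\eta$-factors are merely $O(1)$. Summing exponents in the numerator gives $-|\alpha| + r - \sum_{\text{mixed }i}|\beta_i|$, where $r$ is the number of mixed factors, and since each such $|\beta_i|\geq 1$ this is $\leq -|\alpha|$, matching the claim exactly. The cancellation in the pure $\xi$-derivatives is unavoidable: without it one would lose a factor of $|\xi|^{|\alpha|-1}$, and the main technical point of the lemma is precisely this Lipschitz-in-$\eta$ gain in the regime where $\epsilon_2 = -1, \ell = c_s$.
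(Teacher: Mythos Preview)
Your argument is correct and follows the same line as the paper's proof for the lower bound on $|\phi|$: the paper treats only the most delicate case $\phi^{+,-}_{c_s,c_s,c_s}$ via a Taylor expansion of $\langle \xi-\eta\rangle_{c_s}$ at large $|\xi|$, which is equivalent to your cleaner Lipschitz bound $|\langle\xi\rangle_{c_s}-\langle\xi-\eta\rangle_{c_s}|\le c_s|\eta|$ combined with $\langle\eta\rangle_k - c_s|\eta|\ge\kappa_0$. Your treatment of the remaining cases, yielding the stronger bound $|\phi|\gtrsim|\xi|$, is exactly the ``easier'' situation the paper alludes to without detailing.

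Where you go beyond the paper is in actually supplying the derivative estimates~(\ref{phase2}), which the paper explicitly declines to write out. Your Fa\`a di Bruno bookkeeping is the natural argument, and the decisive observation---that in the delicate case $\epsilon_2=-1,\ \ell=c_s$ the pure $\xi$-factors $\partial_\xi^{\alpha_i}\phi$ enjoy the extra cancellation $\partial^{\alpha_i}\langle\cdot\rangle_{c_s}(\xi)-\partial^{\alpha_i}\langle\cdot\rangle_{c_s}(\xi-\eta)=O(|\eta|\,|\xi|^{-|\alpha_i|})$---is exactly what is needed and is presumably what the authors had in mind. Your exponent count $-|\alpha|+r-\sum_{\text{mixed}}|\beta_i|\le -|\alpha|$ is correct and closes the argument.
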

\begin{proof}
We will only consider the phase  $\phi^{ + , -}_{c_s,c_s,c_s} {=} 
 \langle \xi \rangle_{c_s} +   \langle \eta \rangle_{c_s}  -  \langle \xi - \eta \rangle_{c_s}$ since 
the other phases are easier. Furthermore, we only prove the estimate on $\frac{1}{\phi}$, not its derivatives. We have 
\begin{eqnarray*}
 \langle \xi - \eta \rangle_{c_s}  &=&  c|\xi| \sqrt{ 1 + \frac{1}{c_s^2|\xi|^2}  -2 \frac{\xi.\eta}{|\xi|^2} 
   + \frac{|\eta|^2}{|\xi|^2}   }  \\ 
 &=&  c_s|\xi| \left[  1 + \frac{1}{2 c_s^2|\xi|^2}  -2 \frac{\xi.\eta}{2 |\xi|^2} 
   + \frac{|\eta|^2}{2 |\xi|^2}  + O(\frac1{|\xi|^2} )    \right]. 
 \end{eqnarray*}
Hence, we see  that $  \langle \xi - \eta \rangle_{c_s}  -  c_s|\xi|  -   c_s|\eta| \leq \frac{C}{|\xi|}  $, from 
which we deduce that 
\begin{eqnarray*}
\phi^{ + , -}_{c_s,c_s,c_s} \geq   \langle \xi \rangle_{c_s} +   \langle \eta \rangle_{c_s}  -  c_s|\xi|  -   c_s|\eta| 
 -\frac{C}{|\xi|}. 
 \end{eqnarray*} 
Hence,  if $C_0$ is big enough then,
$ \phi^{ + , -}_{c_s,c_s,c_s} \geq  \frac{  \sqrt{1 +  (c_s  C_\mathcal{R} )^2}  -  c_s  C_\mathcal{R}  }{2} > 0$.
\end{proof}

\subsection{Energy estimates}

The Sobolev estimates for the Maxwell part were performed using simply Strichartz estimates and integration by parts in time depending on the cases. Due to the further loss of a derivative, this method does not apply here. Instead we will perform an iterated  energy estimate that we find interesting in its own right. 

Using that $  u  $  and $  n   $ are both real, we deduce that 

\begin{equation} \label{Nest0}
\partial_t \frac{\|  \partial^N \mathcal{A}  \|_{L^2}^2}2  = \frak{Re} \int \nabla .   u  |  \partial^N \mathcal{A}  |^2 
 + i [ \langle D \rangle_{c_s} ,    n  ] \partial^N \overline{\A}  \partial^N \mathcal{A} +  
 R^N  \partial^N \overline{\A} .  
\end{equation}
Hence
\begin{equation}
\begin{aligned}
 \label{Nest}
& \frac{\|  \partial^N \mathcal{A}(t)   \|_{L^2}^2}2  - \frac{\|  \partial^N \mathcal{A}_0   \|_{L^2}^2}2  
 =   \\
& \quad \quad \quad   \int_0^t  \frak{Re} \int \left( \nabla .   u  |  \partial^N \mathcal{A}  |^2 
 + i [ \langle D \rangle_{c_s} ,    n  ] \partial^N \overline{\A}  \partial^N \mathcal{A} +  
 R^N  \partial^N \overline{\A} \right)\,ds.
\end{aligned}
\end{equation}
We would like now to explain how to control the three terms on the right-hand side of \eqref{Nest}.  
For the first term, we split $u$ into the outcome and non-outcome parts 
$ u =  Z_{\mathcal{O}} u   +  \widetilde Z_{\mathcal{O}} u    $. The  non-outcome part has enough 
decay to apply directly the Gronwall argument. Hence, we will only concentrate on the outcome part.   
We recall that  the profile $a(t)$ associated to $\A$ is defined by  $\A(t)  = e^{i \langle D \rangle_{c_s} t   } a(t) $. 
Also, we have
\begin{equation}\label{Zu}
  Z_{\mathcal{O}} u  =  Z_{\mathcal{O}} \frac{\nabla}{|D|\<D\>} \times  ( \frac{e^{it\<D\>} b -e^{-it\<D\>} \overline{b}   }{2i}   ) 
    + i   Z_{\mathcal{O}} \frac{\nabla}{|D|}  ( e^{i \langle D \rangle_{c_s} t   } a  - e^{i \langle D \rangle_{c_s} t   } \overline{a}   )  .    \end{equation} 
We denote by $ e^{\pm it\<D\>_l} c(t) = \C(t)  $ the divergence of 
any one  of the four terms appearing in \eqref{Zu}. To control  the first term in the right-hand side of 
\eqref{Nest}, it is enough to rewrite it  in Fourier space. Hence, it is enough  to consider
\begin{equation}\label{model} 
\int_0^t  \int\! \int   e^{is\phi(\xi,\eta)} \widetilde{m}(\xi,\eta) \widehat c (s,\eta)  \widehat{\partial^N a} (s,\xi -\eta) 
 \overline{ {\widehat{\partial^N a}  }  } (s,\xi)  d\eta\, d\xi \,  ds 
\end{equation} 
where the phase $\phi$ is given by $\phi(\xi,\eta) =  \langle \xi - \eta \rangle_{c_s}  -  \langle \xi \rangle_{c_s} \pm \langle \eta \rangle_{l}$and $\widetilde{m}(\xi,\eta) \overset{def}{=} \chi_\mathcal{O} (\eta)$. Split $\widetilde{m}(\xi,\eta) = \theta\left(\frac{\xi}{M_0}\right) \widetilde{m}(\xi,\eta) + \left[ 1 - \theta\left(\frac{\xi}{M_0}\right) \right] \widetilde{m}(\xi,\eta)$. The first term corresponds to low frequencies of $\partial^N \mathcal{A}$, which are easily estimated; thus, we shall consider in the following that
$$
\widetilde{m}(\xi,\eta) =  \left[ 1 - \theta\left(\frac{\xi}{M_0}\right) \right] \chi_\mathcal{O} (\eta)
$$
From Lemma \ref{lemNon}, we know that $\phi$ is always bounded away from zero 
in the support of $\widetilde{m}$. Hence, we can integrate by parts in time (using the identity $\frac{1}{i\phi} \partial_s e^{is\phi} = e^{is\phi}$) in \eqref{model} and get 
\begin{subequations}
\begin{align}
i(\ref{Zu}) = & \label{eng1} - \int_0^t  \int\! \int  \widetilde{m}(\xi,\eta) \frac{e^{is\phi}}{\phi}  \widehat c (s,\eta)
 \partial_s    \Big( \widehat{\partial^N a} (s,\xi -\eta) 
 \overline{ {\widehat{\partial^N a}  }  } (s,\xi)        \Big)  d\eta\, d\xi\,   ds  \\ 
& \ \label{eng2} - \int_0^t  \int\! \int  \widetilde{m}(\xi,\eta) \frac{e^{is\phi}}{\phi}  \Big(  \partial_s \widehat c (s,\eta)  \widehat{\partial^N a} (s,\xi -\eta) 
 \overline{ {\widehat{\partial^N a}  }  } (s,\xi) \Big)  d\eta\, d\xi\,   ds  \\ 
& \quad  + \int\! \int    \frac{e^{it\phi}}{\phi} \widetilde{m}(\xi,\eta)  \widehat c (t,\eta)  \widehat{\partial^N a} (t,\xi -\eta) 
 \overline{ {\widehat{\partial^N a}  }  } (t,\xi)    d\eta \,d\xi  \\
& \quad \quad   -   \int\! \int    \frac{1}{\phi} \widetilde{m}(\xi,\eta) \widehat c (0,\eta)  \widehat{\partial^N a} (0,\xi -\eta) 
 \overline{ {\widehat{\partial^N a}  }  } (0,\xi)  d\eta\, d\xi 
\end{align}
\end{subequations}
We rewrite the time derivative in \eqref{eng1}  as
\begin{align}
   \Big(  \partial_s  \widehat{\partial^N a} (\xi -\eta) 
 \overline{ {\widehat{\partial^N a}  }  } (\xi)   +  \widehat{\partial^N a} (\xi -\eta) \partial_s   
 \overline{ {\widehat{\partial^N a}  }  } (\xi)        \Big)   
\end{align}
From \eqref{NAequat}, we deduce that 
\begin{equation}
e^{it\langle D \rangle_{c_s}  } \partial_t  \partial^N a = - u\cdot   \nabla \partial^N \mathcal{A}  
 + i n \langle D \rangle_{c_s} \partial^N  \mathcal{A}  + R^N.  
\end{equation} 
 Hence, \eqref{eng1}  can be expanded as 
\begin{subequations}
\begin{align}
& (\ref{eng1}) = - \int_0^t  \int\! \int  \widetilde{m}(\xi,\eta) \frac{1}{\phi}  \widehat \C(\eta)  
\left[ \widehat{u\cdot\nabla\partial^N \mathcal{A}    }  (\xi-\eta) \overline{ {\widehat{\partial^N \A}  }  } (\xi)   
+ \widehat{\partial^N \mathcal{A}    }  (\xi-\eta) \overline{ {\widehat{u\cdot\nabla\partial^N \A}  }  } (\xi)  
\right]  \label{cous1} d\eta\, d\xi\,   ds  \\ 
& \quad  - \int_0^t  \int\! \int  \widetilde{m}(\xi,\eta) \frac{i}{\phi}  \widehat \C(\eta)  
\left[ \widehat{ n \langle D \rangle_{c_s}  \partial^N \mathcal{A}    }  (\xi-\eta) \overline{ {\widehat{\partial^N \A}  }  } (\xi)   
-  \widehat{\partial^N \mathcal{A}    }  (\xi-\eta) \overline{ {\widehat{  n \langle D \rangle_{c_s}  \partial^N \A}  }  } (\xi) 
\right]  \label{cous2} d\eta\, d\xi\,   ds    \\ 
& \quad - \int_0^t  \int\! \int \widetilde{m}(\xi,\eta)  \frac{1}{\phi}  \widehat \C(\eta) 
\left[ \widehat{R^N  }  (\xi-\eta) \overline{ {\widehat{\partial^N \A}  }  } (\xi)   
-  \widehat{\partial^N \mathcal{A}    }  (\xi-\eta) \overline{ {\widehat{ R^N}   } }(\xi) 
\right]d\eta\, d\xi\,   ds 
\end{align}
\end{subequations}
The difficulty in bounding \eqref{cous1} is that $\A $ appears with $N+1$ derivatives. The main 
idea is to use some sort of energy estimate to perform an integration by part so that the 
extra derivative can be moved on a term with fewer derivatives.  Keeping in mind that $u$ is real-valued, we can rewrite \eqref{cous1}  as 
\begin{align*}
\eqref{cous1} = & - \int_0^t  \int\!  \int\! \int  \widetilde{m}(\xi,\eta) \frac{i}{\phi}  \widehat \C(\eta)  
\left[ \widehat {u}(\nu) \cdot ( \xi-\eta -\nu )  \widehat{\partial^N \mathcal{A}    }  (\xi-\eta -\nu  ) 
 \widehat { \partial^N  { \overline {\A}  }  } ( - \xi)   \right. \\
& \quad \quad  \left.  + \widehat{\partial^N \mathcal{A}    }  (\xi-\eta)  \widehat{u}(\nu) 
 \cdot (-\xi-\nu)  \widehat { \partial^N  {\overline {\A}  }  } (-\xi-\nu)  \right]  d\eta \, d\xi \, d\nu \, ds \\ 
= & -i \int_0^t  \int\!  \int\! \int    \widehat \C(\eta)  \widehat {u}(\nu) \cdot
\left[ \mu(\xi,\eta) ( \xi-\eta -\nu ) \widehat{\partial^N  \mathcal{A}    }  (\xi-\eta -\nu  ) 
 \widehat { \partial^N  { \overline {\A}  }  } ( - \xi)   \right. \\
& \quad \quad \left. + \mu(\xi-\nu,\eta)   \widehat{\partial^N \mathcal{A}    }  (\xi-\eta - \nu) 
 (-\xi)  \widehat { \partial^N  {\overline {\A}  }  } (-\xi)  \right]  d\eta \, d\xi  \,d\nu \, ds 
 \end{align*}
 where $\mu(\xi,\eta) = \frac{\widetilde{m}(\xi,\eta)}{\phi (\xi,\eta) }$  and we made the change of variable 
$\xi \to \xi -\nu$ in the last line. The integrand of the term appearing in the last two lines can be 
rewritten as 
\begin{align*}
 \widehat \C(\eta)  \widehat {u}(\nu)  \widehat{\partial^N \mathcal{A}    }  (\xi-\eta - \nu) 
 \widehat { \partial^N  {\overline {\A}  }  } (-\xi) \cdot  
\left[   \mu(\xi,\eta)  ( \xi-\eta -\nu )  +  \mu(\xi-\nu,\eta)  \xi    \right]
\end{align*}
and the term between brackets is equal to 
\begin{equation}
\label{egret}
M(\xi,\eta,\nu) = -  \mu(\xi,\eta)   (\eta+ \nu )   +   \left[   \mu(\xi,\eta)  -   \mu(\xi-\nu,\eta)     \right] \xi .
\end{equation}
Proposition~\ref{coot} gives the desired conclusion, namely that
\begin{equation}
\begin{split}
\left\|(10.10a)\right\|_2 & \lesssim \int_0^t \left| \int \int \int M(\xi,\eta,\nu) \widehat \C(\eta)  \widehat {u}(\nu) \widehat{\partial^N  \mathcal{A} }  (\xi-\eta -\nu  ) 
 \widehat { \partial^N  { \overline {\A}  }  }(-\xi)  d\eta \, d\xi\, d\nu  \right| \,ds \\
& \lesssim \int_0^t \|u\|_\infty \|\mathcal{C}\|_\infty \left\|\partial^N  \mathcal{A}\right\|_2^2 \,ds \\
& \lesssim \left\|(\mathcal{A},\mathcal{B}) \right\|_X^4 \int_0^t \frac{1}{\<s\>^{1+6 \delta_1}} \,ds \lesssim \left\|(\mathcal{A},\mathcal{B}) \right\|_X^4
\end{split}
\end{equation}
The treatement of \eqref{cous2} is very similiar and we do not detail it here.   \vspace{.2cm}

 To control  the second  term on the right-hand side of  
\eqref{Nest}, we  rewrite it  in Fourier space. Hence, it is enough  to consider 
\begin{equation}\label{model2} 
\int_0^t  \int\! \int   e^{is\phi}  \widehat c (\eta)   (  \langle \xi - \eta \rangle_{c_s}  -  \langle \xi \rangle_{c_s}     ) 
  \widehat{\partial^N a} (\xi -\eta) 
 \overline{ {\widehat{\partial^N a}  }  } (\xi)  d\eta d\xi   ds 
\end{equation} 
where the phase $\phi$ is given by $\phi(\xi,\eta) =  \langle \xi - \eta \rangle_{c_s}  -  \langle \xi \rangle_{c_s}  
 \pm \langle \eta \rangle_{c_s}    $ and $ e^{\pm i \langle D \rangle_{c_s} t  } c  $ is one of the two terms 
appearing in the decomposition of $n$ as $n =e^{i \langle D \rangle_{c_s} t  } N + e^{- i \langle D \rangle_{c_s} t  } \overline{N}   $.
The estimate of \eqref{model2}  is exactly the same as the estimate of \eqref{model} and we do not 
detail it again.    \vspace{.2cm} 

Now, it remains to control the last term on the right-hand side of \eqref{Nest}, namely the term 
involving the rest term. Again, if the low frequency term is non-outcome then we can 
estimate the term directly using  the integrable  $L^6$ decay  of the non-outcome part. 
Hence, the only  difficult terms are those for which the low frequency term is outcome. 
The most difficult  terms are very similar to those we treated above by integration by parts in time, namely 
\eqref{model}   and \eqref{model2}.  In addition we have terms of the type 
\begin{equation}\label{model3} 
\int_0^t  \int\! \int   e^{is\phi}  \widehat c (\eta)   \overline{ \widehat{\partial^N a} } (\xi -\eta) 
  {\widehat{\partial^N \overline{a}  }  } (\xi)  d\eta d\xi   ds 
\end{equation}  
where the phase $\phi$ is given by $\phi(\xi,\eta) = - \langle \xi - \eta \rangle_{c_s}  -  \langle \xi \rangle_{c_s}  
 \pm \langle \eta \rangle_{c_s}    $ and terms obtained by taking the complex conjugate  
% \begin{equation}\label{model3}  
% \int_0^t  \int\! \int   e^{is\phi}  \widehat c (\eta)   \overline{ \widehat{\partial^N a} } (\xi -\eta) 
%  \overline{ {\widehat{\partial^N a}  }  } (\xi)  d\eta d\xi   ds 
% \end{equation}
These two types of terms are  even better than \eqref{model}  since integration by 
parts in time gains a factor $\frac1{|\phi|}$ that behaves like $\frac1{|\xi|}$ in the 
dangerous region $|\eta| << |\xi| $.

We also have terms  of the form 
\begin{equation}\label{model5} 
\int_0^t  \int\! \int   e^{is\phi}  \widehat c (\eta)  \widehat{\partial^{N-k} a} (\xi -\eta) 
 \overline{ {\widehat{\partial^N a}  }  } (\xi)  d\eta d\xi   ds 
\end{equation} 
where  $ k \geq 1 $  and 
the phase $\phi$ is given by $\phi(\xi,\eta) = \pm  \langle \xi - \eta \rangle_{c_s}  \pm   \langle \xi \rangle_{c_s}  
 \pm \langle \eta \rangle_{l}    $  and  $ e^{\pm it\<D\>_l} c(t) = \C(t)  $ denote outcome (low frequency) terms 
  and we denote undistinctly $\widehat{a}$ for $\widehat{a}$ or  $\widehat{\bar a}$ or their complex conjugate.
 From Lemma \ref{lemNon}, we know that $\phi$ is always bounded away from zero 
in the region we are interested in, nemaly $\xi$ large and $|\eta|\leq C_\mathcal{R} $. Integration by parts in time 
yields terms that are easier to control than above. In particular the corresponding term to 
\eqref{eng1}  can be  expressed as 
\begin{align}
    =& - \int_0^t  \int\! \int   \frac{1}{\phi}  \widehat \C(\eta)  
\left[ \widehat{u\cdot\nabla\partial^{N-k} \mathcal{A}    }  (\xi-\eta) \overline{ {\widehat{\partial^N \A}  }  } (\xi)   
 + \widehat{\partial^{N-k} \mathcal{A}    }  (\xi-\eta) \overline{ {\widehat{u\cdot\nabla\partial^N \A}  }  } (\xi)  
\right]  \label{cous1b} \\ 
&  - \int_0^t  \int\! \int   \frac{i}{\phi}  \widehat \C(\eta)  
\left[ \widehat{ n \langle D \rangle_{c_s}  \partial^{N-k} \mathcal{A}    }  
(\xi-\eta) \overline{ {\widehat{\partial^N \A}  }  } (\xi)   
  -  \widehat{\partial^{N-k} \mathcal{A}    }  (\xi-\eta) \overline{ {\widehat{  n \langle D \rangle_{c_s}  \partial^N \A}  }  } (\xi) 
\right]  \label{cous2b}    \\ 
 & - \int_0^t  \int\! \int   \frac{1}{\phi}  \widehat \C(\eta) 
  \left[ \widehat{R^{N-k}  }  (\xi-\eta) \overline{ {\widehat{\partial^N \A}  }  } (\xi)   
  -  \widehat{\partial^N \mathcal{A}    }  (\xi-\eta) \overline{ {\widehat{ R^N}   } }(\xi) 
 \right]  
 \end{align}
which can be easily estimated.

Finally, we also have terms for which $Pu$ carries the greatest number of derivatives. For these terms, we cannot use the cancellation coming 
from the energy  estimate. To gain the two factors of $|\xi|$, 
 we take advantage of the fact that 
$Pu$ is more regular, namely it is in $H^{N+1}$ and the fact that the phase $\phi$ involved 
in this case is bounded below by $|\xi|/C$.  The term corresponding to \eqref{model} is of  the form 

\begin{equation}\label{model6} 
\int_0^t  \int\! \int   e^{is\phi}  \widehat c (\eta)  \widehat{\partial^{N+1} b} (\xi -\eta) 
 \overline{ {\widehat{\partial^N a}  }  } (\xi)  d\eta d\xi   ds 
\end{equation} 
where the phase $\phi$ is given by $\phi(\xi,\eta) = \pm  \langle \xi - \eta \rangle_{1}  -   \langle \xi \rangle_{c_s}  
 \pm \langle \eta \rangle_{l}    $  and  $ e^{\pm it\<D\>_l} c(t) = \C(t)  $ denote outcome (low frequency) terms. It is clear that  $|\phi| \geq |\xi|/C$
in the region we are interested in, nemaly $\xi$ large and $|\eta|\leq C_\mathcal{R} $. Hence, we can perform 
an integration by parts in time and conclude as before. 

\section{Scattering}\label{scat}

Let us prove for instance that $\mathcal{A}$ scatters. We write symbolically the equation~(\ref{Aequat}) on $\mathcal{\A}$ as
$$
\partial_t \mathcal{A} - i \< D\>_{c_s} \mathcal{A} = \partial \mathcal{C} \mathcal{C}
$$
By definition, $\mathcal{A}$ will scatter in $H^{N-2}$, say at $+ \infty$, if and only if
$$
\int_0^t e^{i s  \< D\>_{c_s} }  \partial \mathcal{C}(s) \mathcal{C}(s) ds
$$
converges as $t \rightarrow \infty$. By the Strichartz estimates~(\ref{strichartz}), it suffices that the right-hand side $\partial \mathcal{C} \mathcal{C}$ belongs to $L_t^{\left( \frac{1}{2} + \frac{3}{2} \delta_1 \right)^{-1}} \left( [0,\infty), L_x^{\left( \frac{1}{3} - \delta_1 \right)^{-1}}\right) $. This is the case since
$$
\left\| \partial \mathcal{C} \mathcal{C} \right\|_{L_t^{\left( \frac{1}{2} + \frac{3}{2} \delta_1 \right)^{-1}} L_x^{\left( \frac{1}{3} - \delta_1 \right)^{-1}}} \lesssim \left\| \mathcal{C} \right\|_X^2 \left\| \< t \>^{C_0 \epsilon}  \< t \>^{-\frac{1}{2} - \frac{3}{2} \delta_1} \right\|_{L^{\left( \frac{1}{2} + \frac{3}{2} \delta_1 \right)^{-1}}} < \infty,
$$
where the last inequality follows since $\epsilon$ is small enough.

\section{Appendix: analytical tools}

\subsection{Sobolev embedding theorem}

If $1 \leq p \leq q \leq \infty$ and
$$
k > \frac{3}{p} - \frac{3}{q},
$$
then 
\begin{equation}
\label{sobolev}
\left\| f \right\|_q \lesssim \left\| f \right\|_{W^{k,q}}.
\end{equation}

\subsection{Product laws}

If $1 <p,r< \infty$, $1\leq q \leq \infty$, $k \geq 0$ and
$$
\frac{1}{p}+\frac{1}{q}=\frac{1}{r},
$$ 
then
\begin{equation}
\label{product}
\left\| fg \right\|_{W^{k,r}} \lesssim \left\|f \right\|_{W^{k,p}} \left\|g\right\|_{q} + \left\|f\right\|_{q} \left\|g \right\|_{W^{k,p}}
\end{equation}

\subsection{Dispersive and Strichartz estimates}

The standard dispersive estimates for Klein-Gordon can be found in Ginibre and Velo~\cite{GV}
\begin{equation}
\label{crane}
\left\| e^{it\<D\>} f \right\|_p \lesssim t^{\frac{3}{p}-\frac{3}{2}} 
\|f\|_{W^{ 5 \left( \frac{1}{2} - \frac{1}{p} \right) + \epsilon, p'}}
\;\;\;\;\;\;\mbox{if $2\leq p \leq \infty$ and $\epsilon>0$.}
\end{equation}
We need the following Strichartz estimate for the Klein-Gordon equation (see for instance Ibrahim, Masmoudi and Nakanishi~\cite{IMN}): if $\epsilon>0$ and $0 \leq \delta \leq \frac{1}{3}$,
\begin{equation}
\label{strichartz}
\left\| \int_0^t e^{is\<D\>} F(s)\,ds \right\|_2 \lesssim \left\|F\right\|_{L^{\left( \frac{1}{2}+\frac{3}{2}\delta \right)^{-1}} W^{\left( \frac{5}{6} - \frac{5}{2}\delta + \epsilon \right),\left( \frac{5}{6}-\delta \right)^{-1}}}.
\end{equation}
For the reader familiar with Besov spaces, this estimate follows from the interpolation between 
$$
\left\| \displaystyle \int_0^t e^{is\<D\>} F(s)\,ds \right\|_2 \lesssim \|F\|_{L^1 L^2}
$$
and
$$
\left\| \displaystyle \int_0^t e^{is\<D\>} F(s)\,ds \right\|_2 \lesssim \left\| F \right\|_{L^2 B^{5/6}_{6/5,2}}.
$$

\subsection{Boundedness of multilinear Fourier multipliers}

After cutting off with the help of the functions defined in Section~\ref{sectionmachinery}, the manipulations which we perform lead to various pseudo product operators. Their boundedness properties are stated in the following proposition. Notice that the statement below is very far from optimal, but sufficient for our purposes.

\begin{prop} 
\label{toucan} Assume that $m$ satisfies the estimates~(\ref{eider}).

(i)  Then for any $p,q,r$ in $(1,\infty)$ satisfying $\frac{1}{r} = \frac{1}{p} + \frac{1}{q}$, and $k \geq 0$,
$$
\left\| T_m (f,g) \right\|_{W^{k,r}} \lesssim \left\| f \right\|_{W^{k+1,p}} \left\| g \right\|_{W^{k+1,q}}.
$$

(ii) Assume
$$
\mu(\xi,\eta) = \widetilde{\chi}_\mathcal{O}(\xi) \chi_\mathcal{S}(\xi,\eta) m(\xi,\eta) \frac{1}{\phi} \quad\mbox{or} \quad \widetilde{\chi}_\mathcal{O}(\xi) \chi_\mathcal{T}(\xi,\eta) m(\xi,\eta) \frac{\partial_\eta \phi}{|\partial_\eta \phi|^2}
$$
Then there exists a constant, which we denote $n \geq 0$, such that for any $p,q,r$ in $(1,\infty)$ satisfying $\frac{1}{r} = \frac{1}{p} + \frac{1}{q}$, and $k \geq 0$,
$$
\left\| T_\mu (f,g) \right\|_{W^{k,r}} \lesssim \left\| f \right\|_{W^{k+n,p}} \left\|g\right\|_{W^{k+n,q}}.
$$

(iii) Assume
$$
\mu(\xi,\eta) = m(\xi,\eta) \zeta^1(\xi,\eta) \quad \mbox{or} \quad m(\xi,\eta) \zeta^1(\xi,\eta) \partial_\xi \phi(\xi,\eta).
$$
Then there exists a constant, which we stil denote $n \geq 0$, such that for any $p,q,r$ in $(1,\infty)$ satisfying $\frac{1}{r} = \frac{1}{p} + \frac{1}{q}$, and $k \geq 0$,
$$
\left\| T_\mu (f,g) \right\|_{W^{k,r}} \lesssim \left\| f \right\|_{W^{k+n,q}} \left\|g\right\|_{p}.
$$

(iv)  Assume
$$
\mu(\xi,\eta) = m(\xi,\eta) \zeta^2(\xi,\eta) \quad \mbox{or} \quad m(\xi,\eta) \zeta^2(\xi,\eta) \partial_\xi \phi(\xi,\eta).
$$
Then there exists a constant, which we stil denote $n \geq 0$, such that for any $p,q,r$ in $(1,\infty)$ satisfying $\frac{1}{r} = \frac{1}{p} + \frac{1}{q}$, and $k \geq 0$,
$$
\left\| T_\mu (f,g) \right\|_{W^{k,r}} \lesssim \left\| f \right\|_{p} \left\|g\right\|_{W^{k+n,p}}.
$$

(v) Assume
$$
\mu(\xi,\eta) \overset{def}{=} \frac{\chi_\mathcal{O}(\eta) \left(1-\theta \left( \frac{\xi-\eta}{M_0} \right) \right)\widetilde{m}(\eta,\xi)}{\phi(\xi,\eta)}
$$
where $\widetilde{m}$ and $\frac{1}{\phi}$ satisfy the estimates 
$$
|\partial_\xi^\alpha \partial_\eta^\beta \widetilde{m}(\xi,\eta)| \lesssim (|\xi|+|\eta|)^{-|\alpha|-|\beta|} \quad \mbox{and}\quad \left|\partial_\xi^\alpha \partial_\eta^\beta \frac{1}{\phi} \right| \lesssim (|\xi|+|\eta|)^{-|\alpha|-|\beta|-1}.
$$
Then
$$
\left\| T_\mu (f,g) \right\|_{H^k} \lesssim \|f\|_\infty \|g\|_{H^{k+1}}.
$$
\end{prop}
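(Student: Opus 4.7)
\medskip

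\noindent\textbf{Proof plan for Proposition~\ref{toucan}.}

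The strategy in every case is to reduce matters to the classical Coifman--Meyer multiplier theorem: if $\sigma(\xi,\eta)$ is smooth on $\mathbb{R}^3\times\mathbb{R}^3\setminus\{0\}$ and satisfies
$|\partial_\xi^\alpha\partial_\eta^\beta \sigma(\xi,\eta)|\lesssim (|\xi|+|\eta|)^{-|\alpha|-|\beta|}$ for enough multi-indices, then $T_\sigma$ is bounded $L^p\times L^q\to L^r$ with $\frac1r=\frac1p+\frac1q$, $1<p,q,r<\infty$. I would combine this with a Littlewood--Paley decomposition in $\xi$, $\eta$ and $\xi-\eta$ to absorb all non-smoothness and to keep track of where derivatives must be placed. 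Write $1=\sum_{j\in\Z}\psi(D/2^j)$ with $\psi$ supported in an annulus, and split each input as $f=\sum_j f_j$, $g=\sum_j g_j$.

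For part~(i), the assumption~(\ref{eider}) on $m_0$ gives $|m_0(\xi)|\lesssim\langle\xi\rangle$ and $|\partial_\xi^\alpha m_0|\lesssim\langle\xi\rangle\,|\xi|^{-|\alpha|}$, while $m_1,m_2$ are 0-homogeneous away from the origin. On each triple of dyadic shells $|\xi|\sim 2^j$, $|\eta|\sim 2^k$, $|\xi-\eta|\sim 2^\ell$ the rescaled symbol is Coifman--Meyer with constant $\sim \langle 2^j\rangle$. The factor $\langle 2^j\rangle$ is exactly the cost of ``one derivative'' on the output, which one trades against one derivative on each input factor (indeed, one needs $\langle 2^j\rangle\lesssim \langle 2^k\rangle \langle 2^\ell\rangle$, which always holds up to the usual Littlewood--Paley constraints $j\le\max(k,\ell)+O(1)$ or high--high--low). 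Summing the dyadic pieces against the $W^{k+1,p}$, $W^{k+1,q}$ norms and applying Coifman--Meyer on each piece yields the stated estimate.

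For part~(ii), the cutoffs $\widetilde\chi_\mathcal{O}(\xi)$ and $\chi_\mathcal{S}$, $\chi_\mathcal{T}$ localize away from the zero set of $\phi$ or $\partial_\eta\phi$, so that $\chi_\mathcal{S}/\phi$ and $\chi_\mathcal{T}\partial_\eta\phi/|\partial_\eta\phi|^2$ are smooth on the support and satisfy~(\ref{rossignol2}) with polynomial bound $|\xi,\eta|^{n_0}$. Multiplying by $m$ and performing the same dyadic decomposition as in (i), the extra factor $(|\xi|+|\eta|)^{n_0}$ is converted into a loss of $n_0+O(1)$ derivatives on both inputs, which is the integer $n$. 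Parts~(iii) and~(iv) use the paraproduct structure: on $\mathrm{Supp}\,\zeta^1$ one has $|\xi-\eta|\lesssim|\eta|$, hence $|\xi|\lesssim|\eta|$ as well, so in each dyadic cell frequency dominance lies in $\eta$; Coifman--Meyer with derivatives placed on $g$ (the $\eta$-input) yields the asymmetric estimate. The extra $\partial_\xi\phi$ factor is smooth of order $0$ on the relevant frequencies and is absorbed into the symbol. Part~(iv) is symmetric.

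For part~(v), the support conditions imply $|\eta|\lesssim 1$ and $|\xi-\eta|\gtrsim M_0$, so $|\xi|\sim|\xi-\eta|\gg|\eta|$, and the hypotheses on $\widetilde m$ and $1/\phi$ give $|\partial_\xi^\alpha\partial_\eta^\beta \mu|\lesssim |\xi|^{-|\alpha|-|\beta|-1}$. After a Littlewood--Paley decomposition $g=\sum_j g_j$ with $|\xi-\eta|\sim 2^j$, the bilinear operator on each piece acts essentially as multiplication of $f$ by $2^{-j}g_j$ (since $\eta$ is bounded, Coifman--Meyer applied to the rescaled symbol produces $\|T_\mu(f,g_j)\|_{L^2}\lesssim 2^{-j}\|f\|_{L^\infty}\|g_j\|_{L^2}$); raising to $H^k$ and summing gives $\|f\|_\infty\,\|g\|_{H^{k+1}}$ after cashing in the $2^{-j}$ against one derivative.

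The main technical obstacle is the bookkeeping in part~(i), namely verifying that the dyadic Coifman--Meyer constants sum correctly under all frequency configurations (low--high, high--high, and the low-frequency case where $m_0$ is only Riesz-like); once this is done, parts~(ii)--(v) are straightforward adaptations since each extra factor contributes only a controlled derivative loss absorbed into the constant~$n$.
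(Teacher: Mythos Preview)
Your plan is essentially the paper's: localize dyadically via a paraproduct/Littlewood--Paley decomposition, apply a basic bilinear multiplier bound on each piece, and convert symbol growth into derivative losses. The paper is even terser: for (i)--(iv) it cites~\cite{PG} and names as the core tool not Coifman--Meyer but the cruder kernel estimate $\|T_\mu(f,g)\|_r\lesssim\|\mu\|_{H^{s}}\|f\|_p\|g\|_q$ (any $s$ making $\check\mu\in L^1(\mathbb{R}^6)$), combined with a paraproduct decomposition for large frequencies; for (v) it simply invokes Coifman--Meyer. Your Coifman--Meyer-on-each-piece variant is an equally valid implementation.

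One labeling slip to fix in (iii): in the paper's convention $\widehat f$ sits at frequency $\eta$ and $\widehat g$ at $\xi-\eta$, so on $\operatorname{Supp}\zeta^1$ (where $|\xi-\eta|\lesssim|\eta|$) the high-frequency input is $f$, not $g$. The derivatives must land on $f$, which is exactly what the stated inequality $\|T_\mu(f,g)\|_{W^{k,r}}\lesssim\|f\|_{W^{k+n,q}}\|g\|_p$ says; your logic is correct once the names are swapped.

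One caution on (v): the factor $\chi_\mathcal{O}(\eta)$ is compactly supported in $\eta$ alone, so its $\eta$-derivatives are $O(1)$ rather than $O((|\xi|+|\eta|)^{-|\beta|})$, and the symbol (even after cashing in the $|\xi|^{-1}$ gain from $1/\phi$) does not literally satisfy the classical Coifman--Meyer hypothesis. The honest argument is the one given later in Lemma~\ref{hummingbird}(i): since $\eta$ lives in a fixed box, decompose in unit cubes (or Fourier series) in $\eta$ and use almost-orthogonality in $\xi$. Your heuristic that the operator ``acts essentially as multiplication of $f$ by $2^{-j}g_j$'' captures exactly this, but the phrase ``Coifman--Meyer applied to the rescaled symbol'' oversells what is actually being invoked. (The paper's own one-line appeal to Coifman--Meyer for (v) has the same looseness.)
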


\begin{proof} Estimates similar to the first four points above were proved in~\cite{PG}. It essentially suffices to use the basic estimate
$$
\left\| T_\mu(f,g) \right\|_{r} \lesssim \|\mu\|_{H^{3/2+\epsilon}} \|f\|_p \|g\|_q
$$
if $\epsilon>0$ and $\frac{1}{p}+\frac{1}{q} = \frac{1}{r}$, the estimates given in Section~\ref{sectionmachinery} on the various symbols, and a paraproduct decomposition to handle large frequencies.

The fifth point follows from the classical Coifman-Meyer theorem~\cite{CM}.
\end{proof}

Next, we want to study a particular kind of symbol, which will not satisfy standard Coifman-Meyer bounds, but still admit H\"older-like bounds (in the bilinear case for instance, we only focus on the case $L^\infty \times L^2 \rightarrow L^2$ bound, but it should be clear from the proof that more general $L^p \times L^q \rightarrow L^r$ bounds, with $\frac{1}{p} + \frac{1}{q} = \frac{1}{r}$, also hold).

\begin{lem} \label{hummingbird}
Let $R$ be a fixed constant

(i) Let $\mu(\xi,\eta)$ be a smooth symbol such that 
$$
 \operatorname{Supp} \mu \subset \{ |\eta| \leq R \} \quad \mbox{and} \quad \left| \partial^\alpha_\xi \partial^\beta_\eta \mu(\xi,\eta) \right| \lesssim \frac{1}{|\xi|^{|\alpha|}} \quad \mbox{for any $\xi,\eta$}.
$$
Then $\displaystyle\left\|  T_\mu(f,g) \right\|_2 \lesssim \|f\|_\infty \|g\|_2$.

(ii) Let $\mu(\xi,\eta,\nu)$ be a smooth symbol such that 
$$
\operatorname{Supp} \mu \subset \{ |\eta| \leq R  \,,\, |\nu| \leq \frac{1}{200} |\xi| \} \quad \mbox{and} \quad \displaystyle \left| \partial^\alpha_\xi \partial^\beta_\eta \partial_\nu^\gamma \mu(\xi,\eta,\nu) \right| \lesssim \frac{1}{|\xi|^{|\alpha|+|\gamma|}}\quad \mbox{for any $\xi,\eta,\nu$}.
$$
Then $\displaystyle\left\|  T_\mu(f,g,h) \right\|_2 \lesssim \|f\|_\infty \|g\|_\infty \|h\|_2$.
\end{lem}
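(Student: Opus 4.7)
The plan is to perform a Littlewood--Paley decomposition in the output variable $\xi$, estimate each dyadic piece by bounding the associated convolution kernel in $L^1$, and then reassemble by almost-orthogonality. Fix a partition of unity $1 = \phi_0(\xi) + \sum_{j \geq 1}\phi_j(\xi)$ with $\phi_j$ supported in $|\xi|\sim 2^j$, set $\mu_j := \phi_j(\xi)\mu$, and choose $j_0$ with $2^{j_0}\sim 200R$. For $j\geq j_0$ the support constraints force $|\xi-\eta|\sim 2^j$ in (i), and in (ii) the combined constraints $|\eta|\leq R$ and $|\nu|\leq |\xi|/200$ force $|\xi-\eta-\nu|\sim 2^j$; consequently
$$
T_{\mu_j}(f,g) = T_{\mu_j}(f, \widetilde{\Delta}_j g), \qquad T_{\mu_j}(f,g,h) = T_{\mu_j}(f, g, \widetilde{\Delta}_j h),
$$
where $\widetilde{\Delta}_j$ is a fattened dyadic projector at scale $2^j$. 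The aggregate low-frequency piece $\sum_{j<j_0}\mu_j$ has compactly supported symbol and is treated by the same kernel argument below.

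The heart of the matter is the uniform estimate $\|K_j\|_{L^1}\lesssim 1$, where $K_j := \mathcal{F}^{-1}\mu_j$ (inverse Fourier transform in all variables). The hypotheses give $|\partial_\xi^\alpha \partial_\eta^\beta \mu_j|\lesssim 2^{-j|\alpha|}$ (and, in (ii), $|\partial_\nu^\gamma \mu_j|\lesssim 2^{-j|\gamma|}$), while the support has volume $\sim 2^{3j}$ in (i) and $\sim 2^{6j}$ in (ii). Combining the trivial bound at the origin with repeated integration by parts in each variable (each $\xi$- or $\nu$-IBP gaining a factor $(2^j|\cdot|)^{-1}$, each $\eta$-IBP a factor $|\cdot|^{-1}$) produces
$$
|K_j(u,v)| \lesssim \frac{2^{3j}}{(1+2^j|u|)^N(1+|v|)^M}\quad\text{in (i),}
$$
$$
|K_j(u,v,w)| \lesssim \frac{2^{6j}}{(1+2^j|u|)^N(1+|v|)^M(1+2^j|w|)^L}\quad\text{in (ii),}
$$
both of which integrate to $O(1)$ after the changes of variables $u'=2^j u$, $w'=2^j w$. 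Writing
$$
T_{\mu_j}(f,g)(x) = c\int\!\!\int K_j(x-z,\, z-y) f(y) g(z)\, dy\, dz
$$
and integrating out the $L^\infty$ factor first gives, via Young's inequality, the uniform bound $\|T_{\mu_j}(f,g)\|_2 \lesssim \|f\|_\infty \|g\|_2$; the trilinear case is identical, integrating out both $L^\infty$ factors.

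To conclude, the functions $T_{\mu_j}(f,g)$ for $j\geq j_0$ are Fourier-supported in essentially disjoint dyadic annuli, so Plancherel together with the argument localization yields
$$
\Bigl\|\sum_{j\geq j_0} T_{\mu_j}(f,g)\Bigr\|_2^2 \lesssim \sum_{j\geq j_0}\|T_{\mu_j}(f,\widetilde{\Delta}_j g)\|_2^2 \lesssim \|f\|_\infty^2 \sum_{j\geq j_0}\|\widetilde{\Delta}_j g\|_2^2 \lesssim \|f\|_\infty^2\|g\|_2^2,
$$
and the analogous estimate with $\widetilde{\Delta}_j h$ in place of $\widetilde{\Delta}_j g$ handles (ii). The main obstacle is the careful accounting in the trilinear kernel estimate: the extra $2^{3j}$ coming from the larger $\nu$-support is compensated exactly by the $2^{-3j}$ gained from $\nu$-integration by parts, and it is precisely here that the hypothesis $|\nu|\leq |\xi|/200$ is essential, since it both caps the $\nu$-support by $2^j$ and guarantees that each $\partial_\nu$ on $\mu_j$ produces the improvement $2^{-j}$ rather than just $O(1)$.
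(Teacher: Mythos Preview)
Your argument is correct. Both you and the paper decompose in the output frequency $\xi$ and close by almost orthogonality through the localization $|\xi-\eta|\sim 2^j$ (resp.\ $|\xi-\eta-\nu|\sim 2^j$), but the implementations differ. For (i) the paper uses a \emph{unit-cube} partition $\mu=\sum_{j\in\mathbb{Z}^3}\zeta(\xi-j)\mu$, so that each piece is compactly supported with uniform $C^k$ bounds and the $L^\infty\times L^2\to L^2$ estimate is immediate; you use a dyadic partition and must track the $2^{j}$ scaling in the kernel bound, which you do correctly. For (ii) the paper runs the classical Coifman--Meyer argument: after the change of variables $\widetilde\mu(\xi-\eta-\nu,\eta,\nu)=\mu(\xi,\eta,\nu)$ it localizes dyadically, expands each $\widetilde\mu_j$ as a Fourier series on a box of the right dimensions, and reduces to modulated Littlewood--Paley pieces with rapidly decaying coefficients. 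Your route---bounding $\|\mathcal{F}^{-1}\mu_j\|_{L^1}$ directly by integration by parts and invoking Young---is more elementary and has the virtue of treating (i) and (ii) uniformly; the paper's Fourier-series decomposition is more explicit and adapts readily when one later needs to commute modulations or track translations, but for the bare $L^\infty\times L^\infty\times L^2\to L^2$ bound your approach is cleaner.
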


\begin{proof} We take for simplicity $R = 1$, and first define standard Fourier space decompositions
\begin{itemize}
\item Let $\zeta$ be a non-negative function, equal to 1 on $B(0,.9)$, zero outside of $B(0,2)$, and such that $\sum_{j \in \mathbb{Z}^3} \zeta(\xi - j) = 1$ for any $\xi$. Denote 
$$
Q_j \overset{def}{=} \sum_{j-3}^{j+3} \zeta(D-j).
$$
\item Let $\psi$ be a non-negative function, equal to 1 on $B(1,1.5)$, zero outside of $B(.5,4)$, and such that $\sum_{j \in \mathbb{Z}^3} \psi \left(  \frac{\xi}{2^j} \right) = 1$ for any $\xi \neq 0$. Further denote
$$
\widetilde{\psi}(\xi) = \sum_{j=-1}^{+1} \psi \left(   \frac{\xi}{2^j} \right) \quad \mbox{and} \quad \chi(\xi) = \sum_{j=-\infty}^{+1} \psi \left(   \frac{\xi}{2^j} \right)
$$
and the associated Fourier multipliers
$$
P_j = \psi \left( \frac{D}{2^j} \right) \quad,\quad
\widetilde{P}_j = \widetilde{\psi} \left( \frac{D}{2^j} \right) \quad \mbox{and} \quad S_j = \chi \left( \frac{D}{2^j} \right).
$$
\end{itemize}

\bigskip
\noindent
\underline{Proof of $(i)$} Split $\mu$ as follows
$$
\mu(\xi,\eta) = \sum_{j \in \mathbb{Z}^3} \zeta(\xi-j) \mu(\xi,\eta) \overset{def}{=} \sum_j \mu_j(\xi,\eta).
$$
The symbols $\mu_j$ are uniformly controlled in $\mathcal{C}^k$ for any $k$. Thus they define operators which are uniformly bounded $L^\infty \times L^2 \rightarrow L^2$. Observe furthermore that, due to frequency localization properties, $T_{\mu_j}(f,g) = T_{\mu_j}(f, Q_jg)$; and that for the same reason, the families $(T_{\mu_j}(f,g))_j$ and $(Q_j g)_j$ are almost orthogonal in $L^2$. These arguments lead to the following inequalities
$$
\left\| T_\mu(f,g) \right\|_2^2 \lesssim \sum_j \left\| T_{\mu_j} (f,g) \right\|_2^2 \lesssim \sum_j \left\| f \right\|_\infty^2 \left\| Q_j g \right\|_2^2 \lesssim \left\| f \right\|_\infty^2 \|g\|_2^2,
$$
proving $(i)$.

\bigskip
\noindent
\underline{Proof of $(ii)$} We will essentially run the original argument of Coifman and Meyer~\cite{CM}.
First set $\mu(\xi,\eta,\nu) = \widetilde{\mu}(\xi-\eta-\nu,\eta,\nu)$, and observe that the bounds on $\mu$ translate into
\begin{equation}
\label{newbound}
\left| \partial_\xi^\alpha \partial_\eta^\beta \partial_\nu^\gamma \widetilde{\mu}(\xi,\eta,\nu) \right|  \lesssim \frac{1}{|\xi|^{|\alpha|+|\gamma|}}.
\end{equation}
Next split $\widetilde{\mu}$ as follows
$$
\widetilde{\mu}(\xi,\eta,\nu) = \sum_j \psi \left( \frac{\xi}{2^j} \right) \widetilde{\mu} (\xi,\eta,\nu) \overset{def}{=} \sum_j \widetilde{\mu}_j(\xi,\eta,\nu) \quad \mbox{up to a remainder}.
$$
Since the remainder is compactly supported, and $\mu$ is closed, it will be easy to estimate, thus we forget about it and focus on the sum over $j$. The support of $\widetilde{\mu}_j(\xi,\eta,\nu)$ is contained in a box $\{ |\eta| \leq 1\,,\, |\xi| \leq 2^{j+1}\,,\, |\nu| \leq \frac{1}{200}2^{j+1} \}$. It can be expanded in (periodic) Fourier series adapted to the larger box $\{ |\eta| \leq 2\,,\,|\xi|\leq 2^{j+2}\,,\,|\nu| \leq 2^{j-7}$, and then recovered by restriction. This gives
$$
\widetilde{\mu}_j(\xi,\eta,\nu) = \widetilde{\psi} \left( \frac{\xi}{2^j} \right)  \chi(2\eta) \chi \left( \frac{\nu}{2^j-5} \right) \sum_{k,\ell,m \in \mathbb{Z}^3} \alpha_{k,\ell,m}^j  e^{i2\pi m \xi 2^{-j-2} }  e^{i 2\pi k\eta} e^{i 2\pi \ell \nu 2^{-j+7}},
$$
or, coming back to $\mu$,
$$
\mu_j(\xi,\eta,\nu) = \widetilde{\psi} \left( \frac{\xi-\eta-\nu}{2^j} \right)  \chi(2\eta) \chi \left( \frac{\nu}{2^j-5} \right) \sum_{k,\ell,m} \alpha_{k,\ell,m}^j  e^{i 2\pi m (\xi-\eta-\nu) 2^{-j-2} }  e^{i 2\pi k\eta} e^{i 2\pi \ell \nu 2^{-j+7}}.
$$
Next notice that the $\alpha^j_{k,\ell}$ are uniformly bounded in $j$, with arbitrarily quickly decaying (inverse) polynomial bounds:
\begin{equation}
\label{boundalpha}
\mbox{for any $N$,} \quad \sup_j |\alpha^j_{k,\ell,m}| = \alpha_{k,\ell,m} \lesssim \frac{1}{|(k,\ell,m)|^N}.
\end{equation}
This can be seen by simply coming back to their definition:
\begin{equation*}
\begin{split}
\alpha^j_{k,\ell,m} & = C 2^{-6j} \int_{|\xi|\leq 2^{j+2}} \int_{|\eta|\leq 1} \int_{|\nu|\leq 2^{j-7}} \widetilde{\mu}_j(\xi,\eta,\nu) e^{- i2\pi m \xi 2^{-j-2} }  e^{- i 2\pi k\eta} e^{- i 2 \pi \ell \nu 2^{-j+7}}\,d\nu\,\,d\eta\,d\xi \\
& = C \int_{|\xi|\leq 1} \int_{|\eta|\leq 1} \int_{|\nu|\leq 1} \widetilde{\mu}_j( 2^{j+2} \xi, \eta , 2^{j-7} \nu) e^{- i 2\pi m \xi}  e^{- i 2\pi k\eta} e^{- i2\pi \ell \nu }\,d\nu\,\,d\eta\,d\xi,
\end{split}
\end{equation*}
and the conclusion follows since the bounds~(\ref{newbound}) imply a uniform control (in $j$) of the symbols $\widetilde{\mu}_j( 2^{j+2} \xi, \eta , 2^{j-7} \nu)$.

Coming back to physical space, we have achieved the following decomposition for $T_\mu$:
$$
T_\mu(f,g,h) = \sum_{j,k,\ell,m} \alpha_{k,\ell,m}^j S_{j-5} f(\cdot+k) S_{0} g(\cdot + \ell 2^{-j+7}) \widetilde{P}_j h(\cdot + m 2^{-j-2}).
$$
The desired estimates follows easily by almost orthogonality between the $j$-summands
\begin{equation*}
\begin{split}
\left\| T_\mu(f,g,h) \right\|_2 & \lesssim \sum_{k,\ell,m} \left[ \sum_j  \alpha_{k,\ell,m}^j \left\| S_{j-5} f(\cdot+k) S_{0} g(\cdot + \ell 2^{-j+7}) \widetilde{P}_j h(\cdot + m 2^{-j-2}) \right\|_2^2 \right]^{1/2} \\
& \lesssim \sum_{k,\ell,m} \left[ \alpha_{k,\ell,m}^j \|f\|_\infty^2 \|g\|_\infty^2 \sum_j \| \widetilde{P}_j h(\cdot + m 2^{-j-2}) \|_2^2 \right]^{1/2} \\
& \lesssim \sum_{k,\ell,m} \alpha_{k,\ell,m} \left[  \|f\|_\infty^2 \|g\|_\infty^2 \|h\|_2^2 \right]^{1/2} \\
& \lesssim \|f\|_\infty \|g\|_\infty \|h\|_2,
\end{split}
\end{equation*}
where we used in the last inequality the bound~(\ref{boundalpha}). \end{proof}

Equipped with the previous lemma, we can prove the following proposition.

\begin{prop} 
\label{coot}
Let $M$ be as in~(\ref{egret}), and fix $\alpha>0$. Then the following estimate holds:
$$
\left\| T_M(f,g,h) \right\|_2 \lesssim \|f\|_{W^{1+\alpha,\infty}} \|g\|_\infty \|h\|_2.
$$
\end{prop}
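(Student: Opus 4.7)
The plan is to decompose $M = M_1 + M_2$ with $M_1(\xi,\eta,\nu) = -\mu(\xi,\eta)(\eta+\nu)$ and $M_2(\xi,\eta,\nu) = [\mu(\xi,\eta)-\mu(\xi-\nu,\eta)]\xi$, and treat each piece separately. Recall from Lemma~\ref{lemNon} that $\mu = (1-\theta(\xi/M_0))\chi_\mathcal{O}(\eta)/\phi(\xi,\eta)$ satisfies $|\partial_\xi^\alpha\partial_\eta^\beta\mu|\lesssim|\xi|^{-|\alpha|}$, with support in $\{|\xi|\gtrsim M_0,\,|\eta|\leq C_\mathcal{R}\}$.

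For $M_1$, split as $-\mu\eta-\mu\nu$. The first summand has a bounded symbol (since $|\eta|$ is bounded on the support), and viewed as a bilinear operator acting on $(g, fh)$ with frequency variables $(\eta,\xi-\eta)$, Lemma~\ref{hummingbird}(i) yields $\|T_{-\mu\eta}(f,g,h)\|_2\lesssim\|g\|_\infty\|fh\|_2\leq\|f\|_\infty\|g\|_\infty\|h\|_2$. For the second summand, the identity $\nu_k\widehat f(\nu) = -i\widehat{\partial_k f}(\nu)$ converts $T_{-\mu\nu_k}(f,g,h) = iT_\mu(g,(\partial_k f)h)$, which by Lemma~\ref{hummingbird}(i) is bounded by $\|\nabla f\|_\infty\|g\|_\infty\|h\|_2$.

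For $M_2$, introduce a smooth cutoff $\zeta(\nu,\xi)$, homogeneous of degree zero for large arguments, with $\zeta=1$ on $\{|\nu|\leq|\xi|/300\}$ and $\zeta=0$ on $\{|\nu|\geq|\xi|/200\}$. On $\mathrm{supp}\,\zeta$ the Taylor expansion
$$
\mu(\xi,\eta)-\mu(\xi-\nu,\eta) = \int_0^1\nu\cdot\partial_\xi\mu(\xi-s\nu,\eta)\,ds
$$
is valid with $|\xi-s\nu|\sim|\xi|$ throughout, so $M_2\zeta = \sum_k\nu_k\widetilde m_k$ where $\widetilde m_k = \zeta\int_0^1\xi\cdot\partial_{\xi_k}\mu(\xi-s\nu,\eta)\,ds$ is bounded and satisfies $|\partial_\xi^\alpha\partial_\eta^\beta\partial_\nu^\gamma\widetilde m_k|\lesssim|\xi|^{-|\alpha|-|\gamma|}$; this fits the hypotheses of Lemma~\ref{hummingbird}(ii), so moving $\nu_k$ onto $\partial_k f$ gives $\|T_{M_2\zeta}(f,g,h)\|_2\lesssim\|\nabla f\|_\infty\|g\|_\infty\|h\|_2$.

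The main difficulty is $M_2(1-\zeta)$, where $|\xi|\lesssim|\nu|$ and $|M_2(1-\zeta)|\lesssim|\nu|$; the symbol is unbounded and Lemma~\ref{hummingbird}(ii) as stated applies only to the opposite regime $|\nu|\ll|\xi|$. I would Littlewood--Paley decompose $f = \sum_{j\geq j_0}P_jf$ (the sum begins at a finite $j_0$ since $|\nu|\gtrsim M_0$ on the support); on $\mathrm{supp}\,\widehat{P_jf}$ one has $|\nu|\sim 2^j$ and hence $|\xi|\lesssim 2^j$, so the rescaled symbol $2^{-j}M_2(1-\zeta)\psi(\nu/2^j)$ is bounded and becomes a Coifman--Meyer symbol on a unit-size box after the rescaling $(\xi,\nu)\mapsto(2^j\xi',2^j\nu')$. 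A Fourier-series argument parallel to the proof of Lemma~\ref{hummingbird}(ii), with $\nu$ playing the role of the large frequency (equivalently, using the substitution $\nu\leftrightarrow\xi-\eta-\nu$ to swap the roles of $f$ and $h$), yields the per-scale estimate
$$
\|T_{M_2(1-\zeta)}(P_jf,g,h)\|_2 \lesssim 2^j\,\|P_jf\|_\infty\|g\|_\infty\|h\|_2.
$$
Summing over $j$ and using $\|P_jf\|_\infty\lesssim 2^{-j(1+\alpha)}\|f\|_{W^{1+\alpha,\infty}}$ produces the convergent geometric series $\sum_{j\geq j_0}2^{-j\alpha}$, and it is precisely here that the hypothesis $\alpha>0$ enters. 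A last delicate point is the subregion $|\xi-\nu|\sim M_0$, where $\mu(\xi-\nu,\eta)$ fails to produce the expected derivative decay; there, however, $|\xi-\eta-\nu|\sim M_0$ so $h$ is frequency-localized to a compact set and a Bernstein estimate handles that piece.
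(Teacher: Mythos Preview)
Your overall strategy coincides with the paper's: split $M=M_1+M_2$, reduce $T_{M_1}$ to the bilinear $T_\mu$, and for $T_{M_2}$ separate the regimes $|\nu|\ll|\xi|$ (Taylor expansion plus Lemma~\ref{hummingbird}(ii)) and $|\nu|\gtrsim|\xi|$ (Littlewood--Paley in $f$, per-scale estimate, summation via $\|P_jf\|_\infty\lesssim 2^{-j(1+\alpha)}\|f\|_{W^{1+\alpha,\infty}}$). Since $|\eta|$ is bounded on the support of $\mu$, your splitting in $|\nu|/|\xi|$ is equivalent to the paper's splitting in $|\xi-\eta-\nu|/|\nu|$, and your treatment of $M_1$ and of the $\zeta$-region of $M_2$ is correct and essentially identical to the paper.

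The gap is in your justification of the per-scale bound
\[
\|T_{M_2(1-\zeta)}(P_jf,g,h)\|_2\lesssim 2^j\,\|P_jf\|_\infty\|g\|_\infty\|h\|_2.
\]
The rescaled symbol is \emph{not} uniformly Coifman--Meyer. For the summand $\mu(\xi,\eta)\xi$, on the support one has $M_0\lesssim|\xi|\lesssim 2^j$, so after rescaling $\xi=2^j\xi'$ the variable $|\xi'|$ ranges down to $M_02^{-j}$; one computes $\bigl|\partial_{\xi'}^k[\mu(2^j\xi',\eta)\xi']\bigr|\lesssim|\xi'|^{-(k-1)}$, which is unbounded for $k\ge 2$ and destroys the uniform decay of the Fourier coefficients. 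The swap $\nu\leftrightarrow\xi-\eta-\nu$ does not repair this: after the swap the would-be ``small'' variable (the new $h$-frequency) also ranges over all of $[0,C\,2^j]$, not over a region $\ll 2^j$, so Lemma~\ref{hummingbird}(ii) is still not directly applicable. Your ``delicate point'' concerning $|\xi-\nu|\sim M_0$ correctly flags an analogous difficulty for the \emph{second} summand, but does not address this multi-scale issue for the first.

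The paper avoids any symbol-derivative analysis in this regime: it treats $\mu(\xi,\eta)\xi$ and $\mu(\xi-\nu,\eta)\xi$ separately and writes $T_{\mu(\xi,\eta)\xi}=\nabla\,T_{\mu(\xi,\eta)}$ (the factor $\xi$ becomes a gradient on the output), then applies only the \emph{bilinear} Lemma~\ref{hummingbird}(i) to $T_\mu\bigl(g,(P_jf)(S_{j+10}h)\bigr)$. Since the output frequency is automatically $\lesssim 2^j$, Bernstein supplies the factor $2^j$, and the lemma gives $\|g\|_\infty\|P_jf\|_\infty\|h\|_2$. The second summand reduces the same way once one observes that $T_{\mu(\xi-\nu,\eta)}(f,g,h)=f\cdot T_\mu(g,h)$. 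This sidesteps the multi-scale symbol issue entirely.
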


\begin{proof}
Recall that 
$$
M(\xi,\eta,\nu) = - \mu(\xi,\eta)(\eta + \nu) + \left[ \mu(\xi,\eta) - \mu(\xi-\nu,\eta) \right] \xi \overset{def}{=} M_1(\xi,\eta,\nu) + M_2(\xi,\eta,\nu),
$$
where the operator $\mu$ is supported in a strip $\{ |\eta| \lesssim 1\,,\,|\xi|>>1\}$ and satisfies the bounds
$$
\left| \partial^\alpha_\xi \partial^\beta_\eta \mu  \right| \lesssim \frac{1}{|\xi|^{|\alpha|}}.
$$
We will treat separately the operators $T_{M_1}$ and $T_{M_2}$, further distinguishing for the latter between the regions where $|\xi-\eta-\nu| \lesssim \nu$, and those where $|\xi-\eta-\nu|>> |\nu|$, by writing
$$
T_{M_2}(f,g,h) = \sum_{j\geq 0} T_{M_2} (P_j f,g,S_{j+10} h) + \sum_{j \geq 0} T_{M_2} (S_{j-10} f,g,P_j h) \quad \mbox{up to a remainder}.
$$
Since the remainder is smooth and compactly supported, it is easily estimated, and we forget about it in the following in order to concentrate on the sum over $j$. Notice that we overtook the Littlewood-Paley operators $P_j$ and $S_j$ defined in Lemma~\ref{hummingbird}.

\bigskip
\noindent
\underline{The operator $T_{M_1}$.} Simply observe that
$$
T_{M_1}(f,g,h) = T_{\mu}(\nabla f, gh) + T_\mu(f,(\nabla g) h).
$$
Thus Lemma~\ref{hummingbird} gives the conclusion.

\bigskip
\noindent
\underline{The operator $T_{M_2}$ in the case $|\xi-\eta-\nu| \lesssim \nu$.} Recall that $M_2$ is given by $\left[ \mu(\xi,\eta) - \mu(\xi-\nu,\eta) \right] \xi$. There is no cancellation between the two summands in the range we consider, so $\mu(\xi,\eta) \xi$ and $\mu(\xi-\nu,\eta) \xi$ can be considered separately. Since they are estimated in similar ways, we focus on the first one. Notice that $T_{\mu(\xi,\eta) \xi} = \nabla T_\mu$. Using Lemma~\ref{hummingbird} and proceeding in a straightforward way, we get the desired estimates:
\begin{equation*}
\begin{split}
\left\| \sum_j \nabla T_\mu(P_j f, g S_{j+10} h) \right\|_2 & \lesssim \sum_j 2^j \left\| T_\mu(P_j f, g S_{j+10} h) \right\|_2 \lesssim \sum_j 2^j \|P_j f\|_\infty \|g\|_\infty \|S_j h\|_2\\
& \lesssim \|f\|_{W^{1+\alpha,\infty}} \|g\|_2 \|h\|_2.
\end{split}
\end{equation*}

\bigskip
\noindent
\underline{The operator $T_{M_2}$ in the case $|\xi-\eta-\nu| >> \nu$.} In this case, we observe that the operator 
$$
(f,g,h) \mapsto \sum_j T_{M_2} (S_{j-10} f,g,P_j h)
$$
has a symbol
$$
M_2'(\xi,\eta,\nu) = M_2(\xi,\eta,\nu) \sum_j \psi \left(\frac{\xi-\eta-\nu}{2^j} \right) \chi \left( \frac{\nu}{2^{j-10}} \right)
$$
which can be written
$$
M_2'(\xi,\eta,\nu) = \widetilde{M}_2(\xi,\eta,\nu) \cdot \nu \quad \mbox{with} \quad 
\widetilde{M}_2(\xi,\eta,\nu) \overset{def}{=} \sum_j \psi \left(\frac{\xi-\eta-\nu}{2^{j-10}} \right) \chi \left( \frac{\nu}{2^j} \right) \xi \int_0^1 \partial_\xi \mu(\xi-t\nu,\eta) \,dt.
$$
The key observation is that, due to the hypotheses on $\mu$,  we have the bound 
$|\xi |^{|\alpha|} |  \partial^\alpha_\xi \mu | \leq C$   and hence 
$\widetilde{M}_2$ satisfies the conditions of Lemma~\ref{hummingbird}. The estimate follows easily:
$$
\left\| T_{M_2} (S_j f,g,P_j h) \right\|_2 = \left\| T_{\widetilde{M}_2} (\nabla f, g,h) \right\|_2 \lesssim \left\|\nabla f\right\|_\infty \left\|g\right\|_\infty \left\|h\right\|_2.
$$
\end{proof}

\end{document}